\newtheorem{cor}{Corollary}[section]
\newtheorem{prop}{Proposition}[section]
\newcommand{\sgn}{\operatorname{sgn}}
\newcommand{\rX}{\mathrm{X}}
\newcommand{\tz}{\tilde{z}}
\newcommand{\hA}{{\hat{A}}}
\newcommand{\al}{{\alpha}}
\newcommand{\be}{{\beta}}
\newcommand{\hB}{{\hat{B}}}
\newcommand{\hP}{{\hat{P}}}
\newcommand{\hW}{{\hat{W}}}
\newcommand{\rL}{\mathrm{L}}
\newcommand{\Lag}[2]{L_{#2}^{{(#1)}}}      
\newcommand{\XLagI}[3]{L_{#2,#3}^{\I(#1)}}  
\newcommand{\XLagII}[3]{L_{#2,#3}^{{\II}(#1)}}                             %
\newcommand{\I}{{\rm{I}}}
\newcommand{\II}{\rm{II}}
\newcommand{\ezeta}[3]{\zeta_{#2,#3}^{#1}}
\newcommand{\Jac}[3]{P_{#3}^{(#1,#2)}}
\newcommand{\Jacp}[3]{{P_{#3}^{(#1,#2)}}'}
\newcommand{\XJac}[4]{{\hat P}_{#3,#4}^{(#1,#2)}}
\newcommand{\cL}{\mathcal{L}}
\newcommand{\Wr}{\operatorname{Wr}}
\newcommand{\cU}{\mathcal{U}}
\newcommand{\cP}{\mathcal{P}}
\newcommand{\Cset}{\mathbb{C}}
\newcommand{\Pud}[2]{{P^{(#1)}_{#2}}}
\newcommand{\hPud}[3]{{{\hat P}^{(#1)}_{#2,#3}}}
\newcommand{\hPudp}[3]{{{\hat P}_{#2,#3}^{(#1)'}}}
\begin{document}

\title[]{Asymptotic behaviour of zeros of exceptional Jacobi and Laguerre
polynomials}
\author{David G\'omez-Ullate}
\address{ Departamento de F\'isica Te\'orica II, Universidad Complutense de
Madrid, 28040 Madrid, Spain.}
\author{ Francisco Marcell\'an }
\address{Departamento de Matem\'aticas, Universidad Carlos III de Madrid, 28911
Legan\'es, Spain.}
\author{Robert Milson}
\address{Department of Mathematics and Statistics, Dalhousie University,
Halifax, NS, B3H 3J5, Canada.}
\begin{abstract}
  The location and asymptotic behaviour for large $n$ of the zeros of
exceptional Jacobi and Laguerre polynomials are discussed.
The zeros of exceptional polynomials fall into two classes: the \textit{regular
zeros}, which lie in the interval of orthogonality and the \textit{exceptional
zeros}, which lie outside that interval.
We show that the regular zeros have two interlacing properties: one is the
natural interlacing between consecutive polynomials as a consequence of their
Sturm-Liouville character, while the other one shows interlacing between the
zeros of exceptional and classical polynomials. A generalization of the
classical Heine-Mehler formula is provided for the exceptional polynomials,
which allows to derive the asymptotic behaviour of their regular zeros.
We also describe the location and the asymptotic behaviour of the
\textit{exceptional zeros}, which converge
for large $n$ to fixed values.
\end{abstract}
\keywords{Exceptional orthogonal polynomials, zeros, outer relative
asymptotics, Mehler-Heine formulas, Sturm-Liouville problems, algebraic
Darboux transformations.}
\subjclass[2000]{Primary 33C45; Secondary 34B24, 42C05. }

\maketitle

\section{Introduction}

Let $\mu$ be a probability measure supported on an infinite subset $E$ of the real line. We assume that $\int_{E} |x^{n}| d\mu < \infty$ for every nonnegative number $n$.
A sequence of monic polynomials $\{P_{n} (x)\}_{n\geq0}$ is said to be orthogonal with respect to $\mu$ when deg $P_{n} = n$ and $\int_{E} P_{n} (x)P_{m} (x) d\mu(x)= 0$ for $n\neq m$.
It is very well known that these polynomials satisfy a three term recurrence
relation that yields for the orthonormalized polynomials a symmetric tridiagonal
(Jacobi) matrix such that the eigenvalues of the $n$ leading principal submatrix
are the zeros of the polynomial $P_{n}$. As a straightforward consequence of
this fact the zeros of $P_{n}$ are real, simple and interlace with the zeros of
$P_{n-1}$. On the other hand, they are located in the interior of the convex
hull of $E$.\\

The theory of orthogonal polynomials is strongly related to Sturm-Liouville problems. In particular, the so called classical orthogonal polynomials (Hermite, Laguerre, and Jacobi) appear as eigenfunctions of second order linear differential operators with polynomial coefficients. Indeed, the corresponding measure of orthogonality is absolutely continuous and their derivative with respect to the Lebesgue measure (weight function) is the density function of the normal, gamma and beta distributions, respectively. Notice that this fact was pointed out by E. Routh in 1884 \cite{Routh} as well as by S. Bochner in 1929 \cite{Bo}, but the orthogonality does not play therein any role. On the other hand, they are hypergeometric functions and, as a consequence, many analytic properties can be deduced from this fact. Moreover, certain properties of their zeros can be easily deduced using the classical Sturm theorems. Finally, a nice electrostatic interpretation of their zeros is deduced from the second order linear differential equation as an equilibrium problem for the logarithmic interaction of positive unit charges under an external field.\\

Exceptional orthogonal polynomials constitute a recent new approach
to spectral problems for second order linear differential operators
with polynomial eigenfunctions. Previously, a constructive theory of
orthogonal polynomials related to the classical ones has been done
in two directions.  The first one is related to the spectral theory
of higher order linear differential operators with polynomial coefficients. For
fourth order differential operators the classification of their eigenfunctions,
which are sequences of orthogonal polynomials with respect to a nontrivial
probability measure supported on an infinite subset of the real line,
was done by H. L. Krall and A. M. Krall \cite{HKrall,AKrall1,AKrall2} and
essentially
yields the classical ones and perturbations of
some particular Laguerre weights ${\rm e}^{-x} + M \delta(x)$, Jacobi weights
$(1-x)^{\alpha} + M \delta(x)$ and Legendre weight $ 1 + M \delta(x-1) + M
\delta(x+1)$, $M\geq0$. For higher order, some examples are known but a general
theory and
classification constitutes an open problem. The second one appears when some
perturbations of the measure are considered. In particular, three cases are
considered in the literature in the framework of the so called spectral linear
transformations \cite{zhedanov97}. The Christoffel transformation (the multiplication of the
measure by a positive polynomial in the support of the measure), the Uvarov
transformation (the addition of  mass points off the support of the measure) and
Geronimus transformation (the multiplication by the inverse of a positive
polynomial). They can be analyzed in terms of the discrete Darboux
transformation of the corresponding Jacobi matrices using the LU and UL
factorizations and commuting them \cite{BueMar}.\\

Exceptional orthogonal polynomials depart from the classical families in that the sequence of exceptional polynomials is not required to contain a polynomial of every degree, and as a consequence new differential operators exist, with \textit{rational} rather than polynomial coefficients. Despite this fact, the sequence of exceptional polynomial eigenfunctions is still dense in the corresponding weighted $L^2$ space and constitutes an orthogonal polynomial system. The measure of orthogonality for the exceptional families is a classical
measure divided by the square of a polynomial with zeros outside the support of the measure. 

The first explicit examples of families of exceptional orthogonal
polynomials are the $\rX_1$-Jacobi and $\rX_1$-Laguerre polynomials,
which are of codimension one, and were first introduced in
\cite{GKM09,GKM10a}. In these papers, a characterization
theorem was proved for these orthogonal polynomial families, realizing
them as the unique complete codimension one families defined by a 
Sturm-Liouville problem. One of the key steps in the proof was the
determination of normal forms for the flags of univariate polynomials
of codimension one in the space of all such polynomials, and the
determination of the second-order linear differential operators which
preserve these flags \cite{GKM05,GKM12b}.

Shortly after, Quesne \cite{Quesne1,Quesne2}  observed the presence of a
relationship between exceptional orthogonal polynomials and the Darboux
transformation\footnote{By Darboux transformation, we do not mean here the
factorization of Jacobi matrices into upper triangular and lower triangular
matrices mentioned above, but the factorization of the second order
linear differential operator into two first order linear differential operators
\cite{GKM04a,GKM04b}.}. This enabled her to
obtain examples of potentials corresponding to orthogonal polynomial
families of codimension two, as well as explicit families of $\rX_2$
polynomials. Higher-codimensional families were first obtained by
Odake and Sasaki \cite{Sasaki-Odake1}. The same authors further showed the
existence of two families of $\rX_m$-Laguerre and $\rX_m$-Jacobi
polynomials \cite{Sasaki-Odake4}, the existence of which was explained in \cite{GKM10b} for
$\rX_m$-Laguerre polynomials and in \cite{GKM12b} for  $\rX_m$-Jacobi polynomials,
through the application of the isospectral algebraic Darboux
transformation first introduced in \cite{GKM04a,GKM04b}. These exceptional
orthogonal polynomials have been applied in a number of interesting physical
contexts, such as Dirac operators minimally coupled to external fields,
\cite{Ho11}, entropy measures in quantum information theory, \cite{DR12},
rational extensions of Morse and Kepler-Coulomb problems,
\cite{Grandati,Grandati2} or discrete quantum mechanics, \cite{SO6}.

The aim of our contribution is to explore analytic properties of these exceptional polynomials. In particular we will focus our attention in the distribution of their zeros in terms of the support of the orthogonality measure as well as their limit behavior. On the other hand, we will analyze some asymptotic properties as the outer relative asymptotics in terms of the corresponding classical orthogonal polynomials and the Mehler-Heine type formulas. Some properties of the zeros have also been analyzed numerically in \cite{HS}.

\section{Exceptional orthogonal polynomials}
  Let $W(z)$ be a positive weight function with finite moments.  Usually,
  orthogonal polynomials are defined by applying Gram-Schmidt orthogonalization
to the
  standard flag $1,z,z^2,\ldots$ relative to an $L^2$ inner product associated
with the weight $W$. Moreover, if the resulting orthogonal
  polynomials are eigenfunctions of a Sturm-Liouville problem, we
  speak of classical orthogonal polynomials.  By Bochner's theorem,
  the range of such polynomials is limited to the classical families
  of Hermite, Laguerre, and Jacobi (for positive weights) and Bessel (for
signed weights).

  In order to go beyond the classical families, we consider orthogonal
  polynomials spanning a non-standard polynomial flag, say with a
  basis $p_m(z), p_{m+1}(z),\ldots, $ where $\deg p_j = j$.  Once we
  drop the assumption that the OP sequence contains a polynomial \textit{of
  every degree}, we obtain new classes of orthogonal polynomials
  defined by Sturm-Liouville problems, which are commonly referred to
  as exceptional orthogonal polynomials (XOPs)\footnote{Note that the
requirement that the degree sequence starts at $m$ and contains every integer
$j>m$ is not essential either, although all the families treated in this paper
belong
to this class. There exist also XOPs where the degree sequence has gaps. They
are related to state-adding Darboux transformations (as opposed to
isospectral) and contain for instance the $X$-Hermite families, beside many
others.}.

  In the last two years it has become clear that the Darboux
  transformation, appropriately generalized to the polynomial context,
  plays an essential part in the deliniation of XOPs.  To wit, let
  $\cP_n$ denote the vector space of polynomials of degree $\leq n$,
  and consider a codimension $m$ polynomial flag
  \[ \cU = \{ U_k\}_{k=1}^\infty,\quad U_{k-1}\subset U_k \subset
  \cP_{m+k-1}\,.\] Furthermore, let
  \begin{equation}
 A[y] = b(z)(y'-w(z) y),\quad B[y]= \hat{b}(z)(y'-\hat{w}(z)y)
  \end{equation}
  be first order linear differential operators with rational coefficients such
that
 \begin{equation}\label{eq:ABdef1}
   B[\cP_{k-1}] = U_k,\quad A[U_k] = \cP_{k-1},\qquad k=1,2,\dots
 \end{equation}
i.e. $B$ maps the standard flag into the codimension $m$ flag while $A$ maps
the codimension $m$ into the standard flag. Note that equation
\eqref{eq:ABdef1} implies that $\ker A=\ker B=0$ and the Darboux transformation
is isospectral.
Next, consider the second order differential operators
  \begin{equation}
   T = AB,\quad \hat{T} = BA.
  \end{equation}
 By construction, $T$ leaves invariant the standard polynomial flag, while
$\hat{T}$ leaves invariant the codimension-$m$ flag $\cU$.  Hence, by
Bochner's theorem,
  \begin{equation}
   T[y] = p(z) y'' + q(z) y' + r(z) y,\quad \hat{T}[y] = p(z) y''
+
  \hat{q}(z) y' + \hat{r}(z) y
  \end{equation}
 where $p,q,r$ are polynomials with $\deg p \leq 2, \deg q\leq 1, \deg r= 0$
but where $\hat{q},
  \hat{r}$ are in general rational functions.
It is then assured that $T$ and $\hat T$ have polynomial eigenfunctions
\footnote{Since no domains have been specified for $A$ and $B$, the term
\textit{eigenfunction} is not meant in the strict spectral
theoretic sense here, but rather as polynomial solutions to the eigenvalue
equation.} . Let $y_j,\; j\geq 0$, denote the polynomial eigenfunctions of
$T[y]$ and $\hat{y}_j,\; j\geq m$, denote the polynomial eigenfunctions of
$\hat{T}[y]$.
  Again, by construction we have the following intertwining relations
\begin{equation}
  TA = A \hat{T},\quad BT = \hat{T} B,
\end{equation}
 which mean that
  \begin{equation}
    B[y_j] = \beta_j \hat{y}_{j+m},\quad A[\hat{y}_{j+m}] =
\alpha_j
  y_j,\quad j=0,1,2,\ldots,
  \end{equation}
 where $\alpha_j, \beta_j$ are constants, i.e.  operator $A$ maps
eigenfunctions of $T$ into eigenfunctions of $\hat T$ while $B$ does the
opposite transformation.

  Furthermore, let
  \begin{equation}
  W(z) = \frac{1}{p}\exp \int^z \!\!\frac{q}{p},\qquad \hat{W} =
\frac{1}{p}\exp \int^z\!\!
 \frac{ \hat{q}}{p}
  \end{equation}
  be the solutions of the Pearson's equations
  \begin{equation}
    \label{eq:pearson}
    (p W)' = q W,\quad (p\hat{W})' = \hat{q} \hat{W}.
  \end{equation}
  This means that $T[y]$ is formally self-adjoint relative to $W$ while
$\hat{T}[y]$ is formally self-adjoint relative to $\hat{W}$.  Consequently, the
eigenpolynomials $y_j$ are formally
  orthogonal with respect to the weight $W(z)$, while $\hat{y}_j$ are
  formally orthogonal relative to $\hat{W}(z)$.  One can also
show
  that
  \[ \int A[f] g W dz + \int B[g] f \hat{W} dz = \text{ boundary
    term};\] which means that  operators $A$ and $-B$ are formally adjoint.
  By a careful choice of the flags, and by imposing appropriate
  boundary conditions one can construct examples where the above
  formal relations hold in the $L^2$ setting (i.e. boundary conditions such that
the boundary terms vanish) and thereby obtain novel
  classes of exceptional orthogonal polynomials.

In the present note we study asymptotic behaviour of XOPs of Laguerre and
Jacobi types. As we  show, in the interval of orthogonality the exceptional
polynomials satisfy a variant of the classical Heine-Mehler formula.  Outside
the interval of orthogonality, the convergence picture is less clear.  However,
one can show that codimension $m$ exceptional orthogonal polynomials  possess
$m$ extra zeros outside the interval of orthogonality, which we shall denote as
\textit{exceptional zeros}. These exceptional zeros have well-defined
convergence behaviour, and they converge to the zeros of some fixed
classical orthogonal polynomial.

\section{Type I Exceptional Laguerre polynomials}
Let us illustrate the above discussion with the particular example of
the so-called type I exceptional Laguerre polynomials.  Let
$\Lag{\al}{n}(z)$ denote the classical Laguerre polynomial of
degree $n$ and
\[ \cL_\al[y] =zy'' + (\al+1-z) y' \] the classical Laguerre operator.
Thus, $y=\Lag{\al}{n}$ is the unique polynomial solution of the equation
\[ \cL_\al[y] = - n y, \quad\text{ with the normalization }\quad
y^{(n)}(0)=(-1)^n.\]
An equivalent boundary condition  is
\begin{equation}
  \label{eq:Lz=0}
  \Lag{\al}{n}(0) = \binom{n+\al}{n}.
\end{equation}

For a fixed non-negative integer $m\geq0$, let us now define
\begin{align}
 \label{eq:xidef} \xi_{\al,m}(z) &= \Lag{\al}{m}(-z),\\
  A^{\I}_{\al,m}[y]  &= \xi_{\al,m} y'-\xi_{\al+1,m} y,\\
  B^{\I}_{\al,m}[y] &= \frac{z y' + (1+\al)\,y}{\xi_{\al,m}},\\
  \XLagI{\al}{m}{m+j} &= -A^{\I}_{\al-1,m}\left[\Lag{\al-1}{j}\right],\qquad
j=0,1,2,\dots\\
  \cL^{\I}_{\al,m}[y] &= \cL_\al[y]+my -2
  (\log\xi_{\al-1,m})'\left(zy'+\al y\right).
\end{align}
The following factorization relations follow from standard Laguerre
identities:
\begin{align}
  \cL_{\al} &= B^{\I}_{\al,m} A^{\I}_{\al,m} +\al+m+1,\\
  \cL^{\I}_{\al,m} &= A^{\I}_{\al-1,m} B^{\I}_{\al-1,m} + \al+m.
\end{align}
Consider now the following codimension $m$ polynomial subspace
\begin{equation}
 U^\I_{\al,j} = \{ f \in \cP_{j+m-1} : \xi_{\al-1,m} | (zf' +\al
f)\},\qquad j=1,2,\ldots,
\end{equation} where $f|g$ means polynomial $f(z)$
divides polynomial $g(z)$.  At the level of flags, the above
factorizations correspond to the following linear isomorphisms:
\begin{equation}
B^\I_{\al-1,m}:U^\I_{\al,j}\to\cP_{j-1},\qquad
A^\I_{\al-1,m}:\cP_{j-1}\to U^\I_{\al,j}.
\end{equation}

The polynomials $\left\{\XLagI{\al}{m}{m+j}\right\}_{j=0}^\infty$ are known in
the literature as the\textit{ type I exceptional codimension $m$ Laguerre
polynomials} (for short,
type I $X_m$-Laguerre) \cite{Sasaki-Odake1,GKM10b}.  By construction, the
$X_m$-Laguerre polynomials have
the following properties:
\begin{itemize}
\item they span the flag $ U^\I_{\al,1}\subset U^\I_{\al,2} \subset\cdots$
\item they satisfy the following second order linear differential equation:
  \begin{equation}
   \cL^\I_{\al,m}\left[ \XLagI{\al}{m}{m+j} \right]=- j
\XLagI{\al}{m}{m+j},\qquad j\geq 0,
  \end{equation}
\item they are orthogonal with respect to the weight
  \begin{equation}
    \label{eq:WIkmdef}
    W^{\I}_{\al,m}(z) := \frac{z^\al
e^{-z}}{\xi_{\alpha-1,m}(z)^2}=\frac{z^\al
e^{-z}}{\left[\Lag{\al-1}{m}(-z)\right]^2},\quad z\in [0,\infty),
  \end{equation}

\item they are dense in the Hilbert space $\rL^2
\big([0,\infty),W^{\I}_{\al,m}\big)$.
\end{itemize}

Note that for $\al\geq 0$ the polynomial
$\xi_{\al-1,m}(z)=\Lag{\al-1}{m}(-z)$ in the denominator of the weight
has its zeros on the negative real axis, and hence $W^\I_{\al,m} dz$ is a
positive definite measure in $\mathbb R^+$, with well defined
moments of all orders.  As a result, for $\al\geq 0$ the set $\{
\XLagI{\al}{m}{n}\}_{n=m}^\infty$ constitutes an orthogonal polynomial
basis of $\rL^2 \big([0,\infty),W^{\I}_{\al,m}\big)$. We observe that
the last property of the above list does not follow by the algebraic
construction and needs to be established by a separate argument. The
interested reader is referred to
\cite{GKM10b} for a direct proof of the
completeness of the $X_m$-Laguerre families.

\begin{prop}
  The type I $X_m$-Laguerre polynomials $\XLagI{\al}{m}{n}$ can be expressed
in terms of classical associated Laguerre polynomials \emph{with the same
parameter}
$\al$ as follows
  \begin{equation}
    \label{eq:L1altrep}
    \XLagI{\al}{m}{m+j}= \xi_{\al,m} \Lag{\al}{j} - \xi_{\al,m-1}
\Lag{\al}{j-1} ,\qquad j\geq 0.
  \end{equation}
\end{prop}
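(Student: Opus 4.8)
The plan is to unfold the definition of $\XLagI{\al}{m}{m+j}$ directly and then convert every factor to the parameter $\al$ by means of two classical Laguerre identities: the derivative rule $\bigl(\Lag{\be}{n}\bigr)' = -\Lag{\be+1}{n-1}$ and the parameter-contiguity relation $\Lag{\be-1}{n} = \Lag{\be}{n} - \Lag{\be}{n-1}$, valid for every admissible $\be$ and $n$. The first identity eliminates the derivative produced when the first-order operator $A^{\I}_{\al-1,m}$ acts, while the second is applied twice to raise the parameter from $\al-1$ to $\al$, once in the factor $\Lag{\al-1}{j}$ and once in the reflected polynomial $\xi_{\al-1,m}(z) = \Lag{\al-1}{m}(-z)$.

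First I would write out the action of the operator. By definition $A^{\I}_{\al-1,m}[y] = \xi_{\al-1,m}\,y' - \xi_{\al,m}\,y$, so taking $y = \Lag{\al-1}{j}$ and using $\bigl(\Lag{\al-1}{j}\bigr)' = -\Lag{\al}{j-1}$ gives
\begin{equation*}
  \XLagI{\al}{m}{m+j} = -A^{\I}_{\al-1,m}\bigl[\Lag{\al-1}{j}\bigr]
  = \xi_{\al-1,m}\,\Lag{\al}{j-1} + \xi_{\al,m}\,\Lag{\al-1}{j}.
\end{equation*}
Note that only the genuine Laguerre polynomial $\Lag{\al-1}{j}(z)$ is differentiated here; the reflected factors $\xi$ are left untouched, which keeps the sign bookkeeping trivial at this stage.

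Next I would raise the parameter in the two surviving factors. Applying the contiguity relation to $\Lag{\al-1}{m}(-z)$ yields $\xi_{\al-1,m} = \xi_{\al,m} - \xi_{\al,m-1}$, and applying it to $\Lag{\al-1}{j}(z)$ yields $\Lag{\al-1}{j} = \Lag{\al}{j} - \Lag{\al}{j-1}$. Substituting both expressions and collecting terms, the two contributions proportional to $\xi_{\al,m}\,\Lag{\al}{j-1}$ cancel, leaving exactly $\xi_{\al,m}\,\Lag{\al}{j} - \xi_{\al,m-1}\,\Lag{\al}{j-1}$, which is the claimed formula \eqref{eq:L1altrep}.

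There is no deep obstacle here: the statement reduces to an algebraic identity among contiguous Laguerre polynomials. The only point demanding care — and the step I would verify first — is the reflected argument in $\xi_{\al,m}(z) = \Lag{\al}{m}(-z)$, since the contiguity relation must be invoked at the point $-z$, and one must confirm that no stray sign enters because $A^{\I}_{\al-1,m}$ differentiates $\Lag{\al-1}{j}(z)$ in $z$ while leaving the $\xi$ factors intact. As consistency checks I would inspect the endpoint $j=0$ (where $\Lag{\al}{-1}=0$ and the formula collapses to $\xi_{\al,m}\,\Lag{\al}{0}=\xi_{\al,m}$, matching $-A^{\I}_{\al-1,m}[1]$) and the leading term in $z$, which comes from $\xi_{\al,m}\,\Lag{\al}{j}$ and has degree $m+j$, in agreement with the flag structure.
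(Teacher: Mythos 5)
Your proposal is correct and follows essentially the same route as the paper: unfold $-A^{\I}_{\al-1,m}[\Lag{\al-1}{j}]$, apply the derivative identity $(\Lag{\al-1}{j})'=-\Lag{\al}{j-1}$, and then use the contiguity relation $\Lag{\al-1}{n}=\Lag{\al}{n}-\Lag{\al}{n-1}$ twice (once at $z$, once at $-z$ for the $\xi$ factor) so that the cross terms in $\xi_{\al,m}\Lag{\al}{j-1}$ cancel. The paper packages the two contiguity applications into a single auxiliary identity, but the computation is the same, and your sign bookkeeping and the $j=0$ check are both sound.
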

The above representation will be specially useful to
discuss the asymptotic properties of the zeros of type I $X_m$-Laguerre
polynomials. It is clear from it that $\XLagI{\al}{m}{m+j}$ has degree $m+j$.
This representation is reminiscent of the expansions obtained by rational
modifications of classical weights in the framework of
spectral linear transformations (see \cite{zhedanov97}). However, it should be
stressed that they are essentially different because in the case of exceptional
polynomials, although there is a rational modification of the weight, we
are not dealing with the standard flag.

\begin{proof}
  Using elementary identities, we have
  \begin{align*}
    0&=\xi_{\al,m}(\Lag{\al}{j} - \Lag{\al}{j-1} - \Lag{\al-1}{j}) +
\Lag{\al}{j-1}
    (\xi_{\al,m}-\xi_{\al,m-1} - \xi_{\al-1,m}) \\
    &= -(\xi_{\al,m} \Lag{\al-1}{j} + \xi_{\al-1,m}
    \Lag{\al}{j-1})+(\xi_{\al,m}\Lag{\al}{j} -\xi_{\al,m-1} \Lag{\al}{j-1}).
  \end{align*}
  Therefore, we can re-express the type I $X_m$-Laguerre polynomials as
  \begin{align*}
    \XLagI{\al}{m}{m+j} &= \xi_{\al,m} \Lag{\al-1}{j} - \xi_{\al-1,m}
{\Lag{\al-1}{j}}'\\
    &=  \xi_{\al,m} \Lag{\al-1}{j} +\xi_{\al-1,m} \Lag{\al}{j-1}\\
    &= \xi_{\al,m} \Lag{\al}{j} - \xi_{\al,m-1} \Lag{\al}{j-1}.
  \end{align*}
\end{proof}

We are now ready to prove an interlacing result for the zeros of type I
$X_m$-Laguerre polynomials, but before let us recall the following classical
identity
\begin{equation}
  \label{eq:Lat0}
  \Lag{\al}{n}(0) = (\al+1)_n /n!
\end{equation}
where 
\[(x)_n  = \begin{cases}  x(x+1) \cdots (x+n-1)  & \text{if $n\geq0$,}
\\
1 &\text{if $n= 0$,}
\end{cases}\] 
is the usual Pochhammer symbol.
Using the above representation we obtain an analogous expression for
the type I $X_m$-Laguerre polynomials:
\begin{equation}
  \label{eq:L1at0}
  \XLagI{\al}{m}{m+j}(0) =  \frac{\al+j+m}{k} \frac{(\al)_m}{m!
}\frac{(\al)_j}{j!}
\end{equation}

\begin{prop}
  For $\al>0$ the type I exceptional Laguerre polynomial
$\XLagI{\al}{m}{m+j}(z)$ has $j$ simple zeros
  in $z\in (0,\infty)$ and $m$ simple zeros in $z\in (-\infty,0)$. The
  positive zeros of $\XLagI{\al}{m}{m+j}(z)$ are located between
  consecutive zeros of $\Lag{\al}{j}$ and $\Lag{\al}{j-1}$ with the smallest
  positive zero of $\XLagI{\al}{m}{m+j}(z)$ located to the left of the
  smallest zero of $\Lag{\al}{j}$.  The negative zeros of
  $\XLagI{\al}{m}{m+j}(z)$ are located between the consecutive zeros of
  $\xi_{\al,m-1}$ and $\xi_{\al,m}$.
\end{prop}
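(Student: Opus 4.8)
The plan is to work entirely from the representation \eqref{eq:L1altrep}, namely
\[ \XLagI{\al}{m}{m+j}(z) = \xi_{\al,m}(z)\,\Lag{\al}{j}(z) - \xi_{\al,m-1}(z)\,\Lag{\al}{j-1}(z), \]
and to count the sign changes of the right-hand side along the real axis. First I would record two elementary facts. Since $\xi_{\al,m}(z)=\Lag{\al}{m}(-z)$ is, for $z\ge 0$ and $\al>0$, a sum of strictly positive terms, both $\xi_{\al,m}$ and $\xi_{\al,m-1}$ are strictly positive on $[0,\infty)$; dually, on $(-\infty,0]$ the two classical factors $\Lag{\al}{j},\Lag{\al}{j-1}$ are strictly positive while $\xi_{\al,m},\xi_{\al,m-1}$ carry all the sign changes. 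Second, I would invoke the classical interlacing of the zeros of Laguerre polynomials of consecutive degree and equal parameter: writing $x_1<\cdots<x_j$ for the zeros of $\Lag{\al}{j}$ and $y_1<\cdots<y_{j-1}$ for those of $\Lag{\al}{j-1}$, one has $x_1<y_1<x_2<\cdots<y_{j-1}<x_j$, and similarly for $\Lag{\al}{m},\Lag{\al}{m-1}$, whose negatives are the zeros of $\xi_{\al,m},\xi_{\al,m-1}$.

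For the positive zeros I would evaluate the representation at the $x_k$ and $y_k$. At a zero $x_k$ of $\Lag{\al}{j}$ the first term drops out, so $\sgn\XLagI{\al}{m}{m+j}(x_k) = -\sgn\Lag{\al}{j-1}(x_k)$, and at a zero $y_k$ of $\Lag{\al}{j-1}$ one gets $\sgn\XLagI{\al}{m}{m+j}(y_k)=\sgn\Lag{\al}{j}(y_k)$, the factors $\xi_{\al,m-1}(x_k),\xi_{\al,m}(y_k)$ being positive. A short sign chase, anchored at $+\infty$ where $\Lag{\al}{j}$ has sign $(-1)^j$, shows that $\XLagI{\al}{m}{m+j}$ takes value-sign $(-1)^k$ at both $x_k$ and $y_k$; consequently its sign reverses across each interval $(y_k,x_{k+1})$, producing one zero there for $k=1,\dots,j-1$. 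To capture the smallest positive zero I would use \eqref{eq:L1at0}, which gives $\XLagI{\al}{m}{m+j}(0)>0$ for $\al>0$, against $\sgn\XLagI{\al}{m}{m+j}(x_1)=-1$; the resulting sign change places one more zero in $(0,x_1)$, to the left of $x_1$ as claimed. This exhibits $j$ zeros in $j$ pairwise disjoint subintervals of $(0,\infty)$, each lying between consecutive zeros of $\Lag{\al}{j}$ and $\Lag{\al}{j-1}$.

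The negative zeros follow by an entirely symmetric argument with the roles of the oscillating and positive factors interchanged: one evaluates at the negated zeros of $\Lag{\al}{m}$ and $\Lag{\al}{m-1}$, uses their interlacing together with $\XLagI{\al}{m}{m+j}(0)>0$, and locates $m$ zeros, each between consecutive zeros of $\xi_{\al,m-1}$ and $\xi_{\al,m}$. To close the argument I would appeal to the degree count: since $\deg\XLagI{\al}{m}{m+j}=m+j$ by the Proposition and we have already produced at least $j$ zeros in $(0,\infty)$ and at least $m$ in $(-\infty,0)$, lying in $m+j$ pairwise disjoint intervals with $0$ excluded, these account for all the zeros; hence there are exactly $j$ positive and $m$ negative zeros and every one is simple. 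The main obstacle I anticipate is bookkeeping the signs precisely enough to pin down the localization intervals, not merely the count, and to secure the ``leftmost'' statement; the delicate points are fixing the anchoring signs at $0$ and at the extreme zeros, and verifying that each forced sign change falls in $(y_k,x_{k+1})$ rather than in $(x_k,y_k)$.
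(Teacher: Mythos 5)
Your proposal is correct and takes essentially the same approach as the paper: both evaluate the representation \eqref{eq:L1altrep} at the interlaced zeros of $\Lag{\al}{j}$ and $\Lag{\al}{j-1}$ (respectively $\xi_{\al,m}$ and $\xi_{\al,m-1}$), obtain the alternating sign pattern $(-1)^k$, use the positivity of $\XLagI{\al}{m}{m+j}(0)$ from \eqref{eq:L1at0} to place the leftmost zero, and conclude by exhaustion against the total degree $m+j$. The only cosmetic difference is that you anchor the sign chase at $+\infty$, whereas the paper anchors it at $z\le 0$ where all the classical factors are positive.
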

\begin{proof}
  Let $0<\ezeta{\al}{n}{1}<\ezeta{\al}{n}{2}<\cdots < \ezeta{\al}{n}{n}$ denote
  the zeros of $\Lag{\al}{n}(z)$ listed in increasing order.  According to the
interlacing property of the zeros of classical orthogonal polynomials, we have
  \[ \ezeta{\al}{j}{1} < \ezeta{\al}{j-1}{1} < \ezeta{\al}{j}{2} <
\ezeta{\al}{j-1}{2}
  < \cdots < \ezeta{\al}{j-1}{j-1} < \ezeta{\al}{j}{j}.\] Recall that
  $\Lag{\al}{n}(z)>0$ for $z\leq 0$.  This implies that $\xi_{\al,m}(z),
  \xi_{\al,m-1}(z)>0$ for $z\geq0$.  Hence, by \eqref{eq:L1altrep},
  \begin{gather*}
    \sgn \XLagI{\al}{m}{m+j}(\ezeta{\al}{j}{i}) =  (-1)^{i},\quad i=1,\ldots,
j;\\
    \sgn \XLagI{\al}{m}{m+j}(\ezeta{\al}{j-1}{i}) = (-1)^{i},\quad i=1,\ldots,
    j-1.
  \end{gather*}
  It follows by \eqref{eq:L1at0} that there is a zero of
  $\XLagI{\al}{m}{m+j}$ in the interval $(0,\ezeta{\al}{j}{1})$ and a zero in
  the interval $(\ezeta{\al}{j-1}{i-1},\ezeta{\al}{j}{i})$ for every
  $i=2,\ldots, j$. An analogous argument places zeros of
  $\XLagI{\al}{m}{m+j}$ in the intervals $(-\ezeta{\al}{m}{1},0)$ and
  $(-\ezeta{\al}{m}{i},-\ezeta{\al}{m-1}{i-1}),\; i=2,\ldots, m$.  By
  exhaustion, each of the above intervals contains one simple zero of
  $\XLagI{\al}{m}{m+j}$.
\end{proof}

We now study the distribution of the zeros of $\XLagI{\al}{m}{n}$ as
$n\to \infty$.  To that end, we will use the classical Heine-Mehler
formula for Laguerre polynomials
\begin{equation}
  \label{eq:hmclassic}
  \Lag{\al}{n}\left(z/n\right) n^{-\al} \rightrightarrows z^{-\al/2}
  J_{\al}(2\sqrt{z}),\qquad n\to \infty,
\end{equation}
where $J_\al(z)$ denotes the Bessel function of the first kind of order $\al$
$(\al>-1)$ and the double arrow denotes uniform
convergence in compact domains of the complex plane.

The exceptional Laguerre polynomials admit a generalization of the classical
Heine-Mehler formula, given by the following:

\begin{prop}[Generalized Heine-Mehler formula]
  We have
  \begin{equation}
    \label{eq:hmL1}
    \XLagI{\al}{m}{n}\left(z/n\right) n^{-\al} \rightrightarrows
\binom{\al+m-1}{m}
    z^{-\al/2} J_\al(2\sqrt{z}),\qquad n\to\infty.
  \end{equation}

\end{prop}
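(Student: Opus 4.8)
The plan is to exploit the explicit representation from the first Proposition, namely
\[
\XLagI{\al}{m}{m+j} = \xi_{\al,m}\,\Lag{\al}{j} - \xi_{\al,m-1}\,\Lag{\al}{j-1},
\]
and pass to the scaled variable $z/n$. Writing $n=m+j$, so that $j\to\infty$ as $n\to\infty$, I would substitute $z\mapsto z/n$ and factor out $n^{-\al}$. The key observation is that the two classical Laguerre factors $\Lag{\al}{j}(z/n)$ and $\Lag{\al}{j-1}(z/n)$ each obey the classical Heine-Mehler formula \eqref{eq:hmclassic}, and since $j/n\to 1$ the scaling argument $z/n\approx (z)/j$ matches the classical statement up to a vanishing correction; both factors therefore converge uniformly on compacta to the same limit $z^{-\al/2}J_\al(2\sqrt z)$ after multiplication by the appropriate power of $n$. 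The prefactors $\xi_{\al,m}(z/n)=\Lag{\al}{m}(-z/n)$ and $\xi_{\al,m-1}(z/n)=\Lag{\al}{m-1}(-z/n)$, however, are \emph{classical Laguerre polynomials of fixed degree} $m$ and $m-1$, so as $n\to\infty$ their argument $-z/n\to 0$ and they converge uniformly on compacta to the constant values $\Lag{\al}{m}(0)$ and $\Lag{\al}{m-1}(0)$.

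The second step is to combine these limits carefully, tracking the powers of $n$. After factoring $n^{-\al}$, I would write $n^{-\al}=\bigl(\tfrac{n}{j}\bigr)^{-\al}j^{-\al}$ and similarly $n^{-\al}=\bigl(\tfrac{n}{j-1}\bigr)^{-\al}(j-1)^{-\al}$, using $(n/j)^{-\al}\to 1$ and $(n/(j-1))^{-\al}\to 1$ to replace the $n$-scaling by the native $j$- and $(j-1)$-scalings demanded by \eqref{eq:hmclassic}. This converts $\Lag{\al}{j}(z/n)\,n^{-\al}$ and $\Lag{\al}{j-1}(z/n)\,n^{-\al}$ into the forms $\Lag{\al}{j}(z'/j)\,j^{-\al}$ and $\Lag{\al}{j-1}(z''/(j-1))\,(j-1)^{-\al}$ with arguments $z',z''\to z$, each tending to $z^{-\al/2}J_\al(2\sqrt z)$. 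The difference of the two scaled Laguerre terms then converges to $\bigl(\Lag{\al}{m}(0)-\Lag{\al}{m-1}(0)\bigr)z^{-\al/2}J_\al(2\sqrt z)$, and it remains only to evaluate the constant.

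The third step is the constant evaluation. By the classical identity \eqref{eq:Lat0}, $\Lag{\al}{m}(0)=(\al+1)_m/m!$ and $\Lag{\al}{m-1}(0)=(\al+1)_{m-1}/(m-1)!$. A short manipulation gives
\[
\Lag{\al}{m}(0)-\Lag{\al}{m-1}(0)
=\frac{(\al+1)_{m-1}}{(m-1)!}\left(\frac{\al+m}{m}-1\right)
=\frac{(\al+1)_{m-1}}{(m-1)!}\cdot\frac{\al}{m},
\]
which I would then rewrite in binomial form to match the claimed coefficient $\binom{\al+m-1}{m}$; indeed $\binom{\al+m-1}{m}=(\al)_m/m!=\tfrac{\al}{m}\,(\al+1)_{m-1}/(m-1)!$, so the two constants agree. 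Assembling the pieces yields the stated limit.

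The main obstacle I anticipate is the bookkeeping of the scaling mismatch in the second step: the representation scales everything by the factor $1/n$ with $n=m+j$, whereas the classical Heine-Mehler formula \eqref{eq:hmclassic} is stated with scaling $1/j$. Controlling the uniform convergence on compacta while interchanging the $1/n$ and $1/j$ scalings — and verifying that the corrections $(n/j)^{-\al}-1$ and the shift from $z/n$ to $z/j$ vanish uniformly on compact sets rather than merely pointwise — is the delicate point, since uniform convergence of the classical formula must be leveraged to push through the small perturbation of the argument. The convergence of the fixed-degree prefactors and the algebra of the constant are, by contrast, routine.
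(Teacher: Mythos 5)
Your proof is correct and follows essentially the same route as the paper: scale the representation \eqref{eq:L1altrep}, apply the classical Heine--Mehler formula \eqref{eq:hmclassic} to the two Laguerre factors, let the fixed-degree prefactors $\xi_{\al,m}$, $\xi_{\al,m-1}$ tend to their values at the origin, and evaluate the resulting constant via \eqref{eq:Lat0}. The only difference is that you explicitly track the passage from the $j$-scaling in \eqref{eq:hmclassic} to the $n=m+j$ scaling of the statement, a point the paper's proof glosses over; your bookkeeping there is sound.
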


A numerical representation of the convergence of the scaled exceptional
Laguerre polynomials to the Bessel function is given in Figure \ref{fig:HM}.

\begin{proof}
 Multiply  \eqref{eq:L1altrep} by $j^{-\al}$ and replace
$z\to z/j$. Taking the limit $j\to\infty$ and using the classical
Heine-Mehler formula \eqref{eq:hmclassic} leads to
\[
j^{-\al} \XLagI{\al}{m}{m+j} \rightrightarrows
\big(\xi_{\al,m}(0)-\xi_{\al,m-1}(0)\big)z^{-\al/2}J_\al(2\sqrt{z}).
\]
The final expression \eqref{eq:hmL1} is recovered by noting that
\[\xi_{\al,m}(0)-\xi_{\al,m-1}(0)=\binom{\al+m-1}{m}\]
as implied by \eqref{eq:xidef} and \eqref{eq:Lat0}.
\end{proof}

Note that for $m=0$ the classical Heine-Mehler formula is recovered as a
particular case.

\begin{figure}[h]\label{fig:HM}
\includegraphics[width=0.75\textwidth]{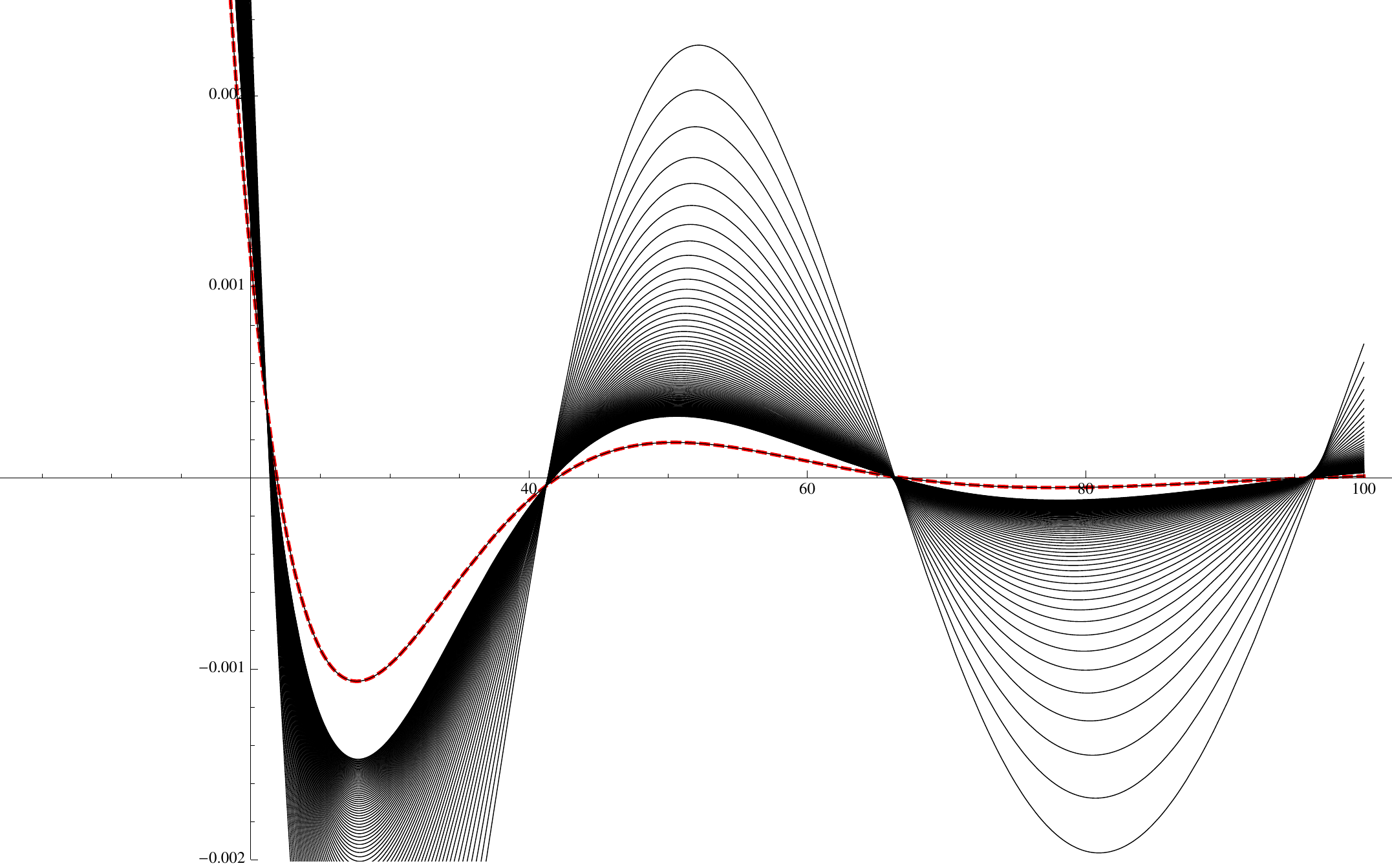}
\caption{Plot of $n^{-\al}\XLagI{n}{m}{\al}(x/n)$ for $m=3$, $\al=5.5$ and
$20\leq n\leq 100$. The dashed red line corresponds to the limiting
Bessel function $x^{-\al/2} \binom{\al+m-1}{m} J_\al(2\sqrt{x})$ predicted by
the generalized Heine-Mehler formula \eqref{eq:hmL1}.}
\end{figure}

\begin{cor} Let $\left\{\tilde z^{(\al)}_i\right\}_{i\geq 1}$ be the sequence
of zeros of the Bessel function $J_\al(z)$ listed in increasing order and let
$\{x_{j,i}\}_{i=1}^j$ denote the regular
zeros of $\XLagI{\al}{m}{m+j}(z)$ in the interval $[0,\infty)$. Then we get
the following asymptotic behaviour
 \begin{equation}\label{eq:asymbessel} \lim_{j\to\infty} j x_{j,i} =
\frac{(\tilde z^{(\al)}_i)^2}{4}.
\end{equation}
\end{cor}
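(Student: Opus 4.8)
The plan is to combine the generalized Heine-Mehler formula \eqref{eq:hmL1} with Hurwitz's theorem on the convergence of zeros of analytic functions. The key point is that \eqref{eq:hmL1} establishes uniform convergence on compact sets of $\Cset$ of the scaled polynomials $g_n(z):=n^{-\al}\XLagI{\al}{m}{n}(z/n)$ to the limiting function $g(z):=\binom{\al+m-1}{m}z^{-\al/2}J_\al(2\sqrt{z})$. Since $\binom{\al+m-1}{m}\neq 0$ for $\al>0$, the zeros of $g$ coincide with the zeros of $z^{-\al/2}J_\al(2\sqrt{z})$, which (away from the origin) are exactly the points $z=(\tz^{(\al)}_i)^2/4$, where $\tz^{(\al)}_i$ are the positive zeros of the Bessel function $J_\al$.

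The main steps I would carry out are as follows. First I would set $n=m+j$ and rewrite the scaling $z\mapsto z/n$ in \eqref{eq:hmL1} in terms of $z/j$, noting that $n/j\to 1$ as $j\to\infty$ so that the two scalings produce the same limit (indeed the proof of the Heine-Mehler formula already worked with the $z/j$ scaling). Thus $h_j(z):=j^{-\al}\XLagI{\al}{m}{m+j}(z/j)$ converges locally uniformly to $g(z)$. Second, I would apply Hurwitz's theorem: since $g$ is analytic and not identically zero, and $z_0=(\tz^{(\al)}_i)^2/4$ is an isolated simple zero of $g$, every sufficiently small disk around $z_0$ contains, for all large $j$, exactly one zero of $h_j$, and these zeros converge to $z_0$. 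Third, I would translate a zero of $h_j$ back to a zero of $\XLagI{\al}{m}{m+j}$: if $w$ is a zero of $h_j$ then $w/j$ is a zero of $\XLagI{\al}{m}{m+j}$, i.e. $w=j\,x_{j,i}$ for the corresponding regular zero $x_{j,i}$. Hence $j\,x_{j,i}\to z_0=(\tz^{(\al)}_i)^2/4$, which is precisely \eqref{eq:asymbessel}.

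The part requiring care is the bookkeeping that matches the $i$-th regular zero $x_{j,i}$ with the $i$-th Bessel zero $\tz^{(\al)}_i$, rather than merely showing that the scaled zeros accumulate on the set of Bessel zeros. Here I would invoke the interlacing and ordering established in the preceding Proposition, which guarantees that for each fixed $i$ the $i$-th smallest positive zero $x_{j,i}$ exists once $j\geq i$ and is monotonically tracked between consecutive classical Laguerre zeros; combined with the fact that $g$ has simple, well-separated zeros ordered by $\tz^{(\al)}_1<\tz^{(\al)}_2<\cdots$, Hurwitz's theorem then pins the $i$-th scaled zero to the $i$-th Bessel zero. One technical subtlety is the behaviour near the origin: the factor $z^{-\al/2}$ makes $g$ singular at $z=0$, so I would restrict attention to an annular region $\{0<r_1\le|z|\le r_2\}$ avoiding the origin where $g$ is analytic and the relevant zeros $z_0$ lie, which is harmless since all $\tz^{(\al)}_i>0$.

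The step I expect to be the main obstacle is justifying that no regular zeros ``escape'' to infinity or ``collapse'' to the origin in a way that would disrupt the index matching. Concretely, one must rule out that for some fixed $i$ the scaled quantity $j\,x_{j,i}$ fails to stay in a compact set, and one must ensure that the number of scaled zeros in each bounded region matches the number of Bessel zeros there in the limit. The cleanest way to handle this is again through Hurwitz's theorem applied on an increasing exhaustion of compacta together with the simplicity of the Bessel zeros; the ordering supplied by the interlacing Proposition then does the rest, so that the limit holds for each individual index $i$ and not merely as a set-theoretic statement about accumulation points.
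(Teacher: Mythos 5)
Your proposal is correct and follows exactly the route the paper takes: the paper's proof is the one-line observation that the result follows from the generalized Heine--Mehler formula \eqref{eq:hmL1} together with Hurwitz's theorem. Your additional care about the $n$ versus $j$ scaling, the singularity of $z^{-\al/2}J_\al(2\sqrt{z})$ at the origin, and the matching of the $i$-th scaled zero to the $i$-th Bessel zero simply fills in details the paper leaves implicit.
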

\begin{proof}
The above result follows from \eqref{eq:hmL1} and Hurwitz's
theorem.
\end{proof}
We have already seen that for fixed $m$ the asymptotic behaviour of the regular
zeros of the exceptional Laguerre polynomials coincides with that of the
classical Laguerre. We now investigate in the same limit the behaviour of the
$m$ exceptional zeros of
$\XLagI{\al}{m}{m+j}(z)$.

\begin{prop}
  As $j\to \infty$ the $m$ zeros of
  $\XLagI{k}{m}{m+j}$ in $(-\infty,0)$ converge to the $m$ zeros of
$\Lag{\al-1}{m}(-z)$.
\end{prop}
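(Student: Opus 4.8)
The plan is to reduce the statement to the outer relative asymptotics of classical Laguerre polynomials together with Hurwitz's theorem, using the representation \eqref{eq:L1altrep}. Fix $\al>0$. Since for $\al>-1$ the polynomial $\Lag{\al}{j}$ has all its zeros in $(0,\infty)$, it is analytic and nonvanishing on a fixed complex neighborhood of each negative real point for all large $j$. Hence on such a neighborhood the negative zeros of $\XLagI{\al}{m}{m+j}$ coincide exactly with the zeros of the quotient
\[
F_j(z) := \frac{\XLagI{\al}{m}{m+j}(z)}{\Lag{\al}{j}(z)} = \xi_{\al,m}(z) - \xi_{\al,m-1}(z)\,\frac{\Lag{\al}{j-1}(z)}{\Lag{\al}{j}(z)},
\]
obtained by dividing \eqref{eq:L1altrep} through by $\Lag{\al}{j}(z)$.

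First I would show that $\Lag{\al}{j-1}(z)/\Lag{\al}{j}(z)\to 1$ as $j\to\infty$, uniformly on compact subsets of $\Cset\setminus[0,\infty)$. This follows from Perron's asymptotic formula, according to which $\Lag{\al}{j}(z)$ equals $C(z)\,j^{\al/2-1/4}\exp(2\sqrt{-jz})$ up to a factor $1+O(j^{-1/2})$, with $C(z)$ independent of $j$: the power prefactors give a ratio tending to $1$, while the exponential contributes $\exp\big(2\sqrt{-z}\,(\sqrt{j-1}-\sqrt{j})\big)$, whose exponent tends to $0$. Feeding this into the display above, $F_j$ converges locally uniformly on $\Cset\setminus[0,\infty)$ to
\[
F_\infty(z) = \xi_{\al,m}(z) - \xi_{\al,m-1}(z) = \Lag{\al}{m}(-z) - \Lag{\al}{m-1}(-z).
\]

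Next I would simplify $F_\infty$ with the contiguity identity $\Lag{\al}{m}(x)-\Lag{\al}{m-1}(x)=\Lag{\al-1}{m}(x)$ (the same elementary identity used in the proof of \eqref{eq:L1altrep}), obtaining $F_\infty(z)=\Lag{\al-1}{m}(-z)$, precisely the polynomial whose zeros $z=-\ezeta{\al-1}{m}{i}$ are the asserted limit points. Since $F_j\to F_\infty$ locally uniformly and the zeros of $\Lag{\al-1}{m}(-z)$ are simple, Hurwitz's theorem yields that the zeros of $F_j$ in $(-\infty,0)$ converge to the zeros of $\Lag{\al-1}{m}(-z)$, exactly one $F_j$-zero approaching each. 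The counts agree because the preceding proposition shows $\XLagI{\al}{m}{m+j}$ has exactly $m$ negative zeros, while $\Lag{\al-1}{m}(-z)$ has exactly $m$ roots, all negative.

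The main obstacle is the uniform outer ratio asymptotics: I must ensure that Perron's expansion holds with error terms uniform on a fixed compact neighborhood (in the cut plane) of each limit point $-\ezeta{\al-1}{m}{i}$, rather than merely pointwise, so that $F_j\to F_\infty$ is locally uniform and Hurwitz's theorem applies; one must also check that $\Lag{\al}{j}$ is zero-free on these fixed complex neighborhoods for all large $j$, which holds since its zeros remain in $(0,\infty)$. Once this uniformity is secured---and it is standard for the Plancherel--Rotach/Perron asymptotics on compact subsets of $\Cset\setminus[0,\infty)$---the remaining steps, namely the ratio limit and the contiguity identity, are routine.
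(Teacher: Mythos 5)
Your proposal is correct and follows essentially the same route as the paper: divide the representation \eqref{eq:L1altrep} by $\Lag{\al}{j}$, use the outer ratio asymptotics $\Lag{\al}{j-1}/\Lag{\al}{j}\rightrightarrows 1$ off $[0,\infty)$ to obtain the locally uniform limit $\xi_{\al,m}-\xi_{\al,m-1}=\xi_{\al-1,m}$, and conclude via Hurwitz's theorem. The only difference is that you supply an explicit justification (Perron's formula and the zero-free neighborhoods) for the uniform ratio asymptotics that the paper simply quotes as known.
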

\begin{proof}
  The classical Laguerre polynomials have the following outer ratio asymptotics
  \begin{equation}
    \label{eq:Lora}
    \frac{\Lag{\al}{n+1}(z)}{ \Lag{\al}{n}(z)} \rightrightarrows 1,\qquad
z\notin
[0,\infty),\qquad n\to\infty.
  \end{equation}
Hence, by \eqref{eq:L1altrep}, we have for $z\notin [0,\infty)$,
  \begin{equation*}
    \frac{\XLagI{\al}{m}{m+j}}{\Lag{\al}{j}} =\xi_{\al,m}
    -\xi_{\al,m-1}\frac{\Lag{\al}{j-1}}{\Lag{\al}{j}}\rightrightarrows
\xi_{\al,m}-\xi_{\al,m-1}
    = \xi_{\al-1,m}.
  \end{equation*}
  Therefore, by Hurwitz's theorem the $m$ exceptional zeros of
  $\XLagI{\al}{m}{m+j}$ converge to the zeros of
$\xi_{\al-1,m}=\Lag{\al-1}{m}(-z)$.
\end{proof}

\begin{figure}[h]\label{fig:Lag1zeros}
\begin{tabular}{cc}
\includegraphics[width=0.45\textwidth]{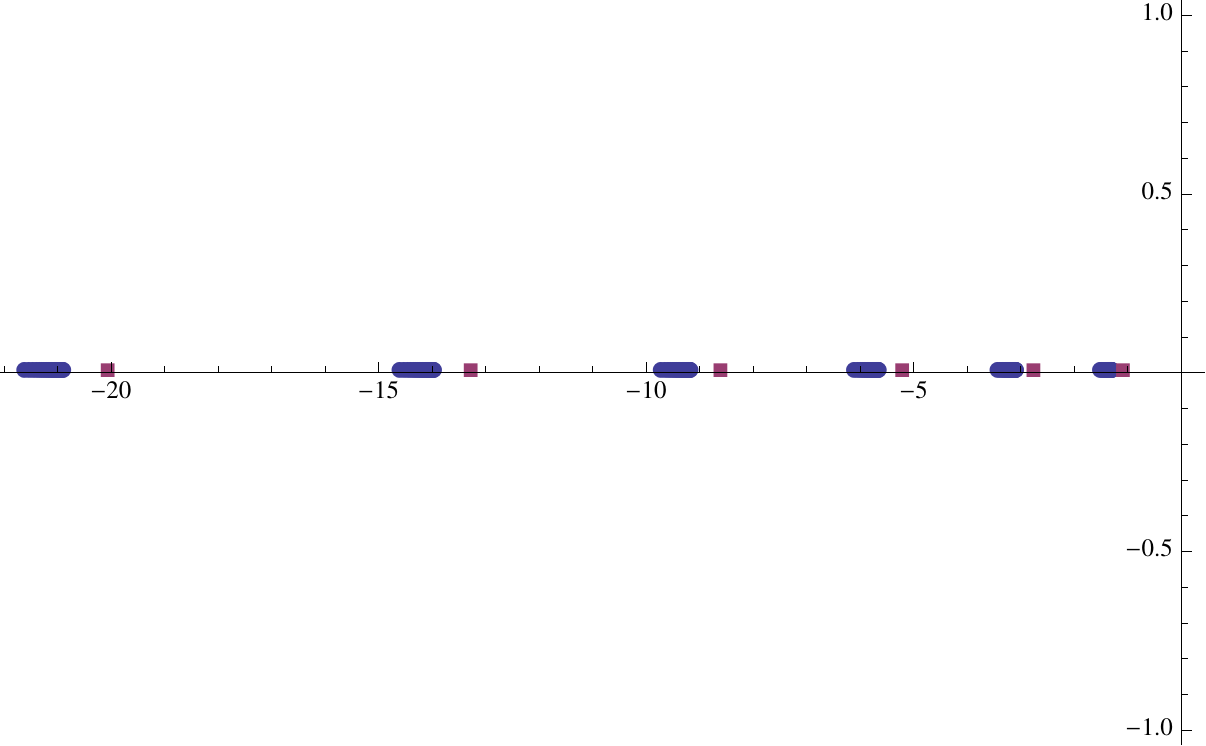} &
\includegraphics[width=0.45\textwidth]{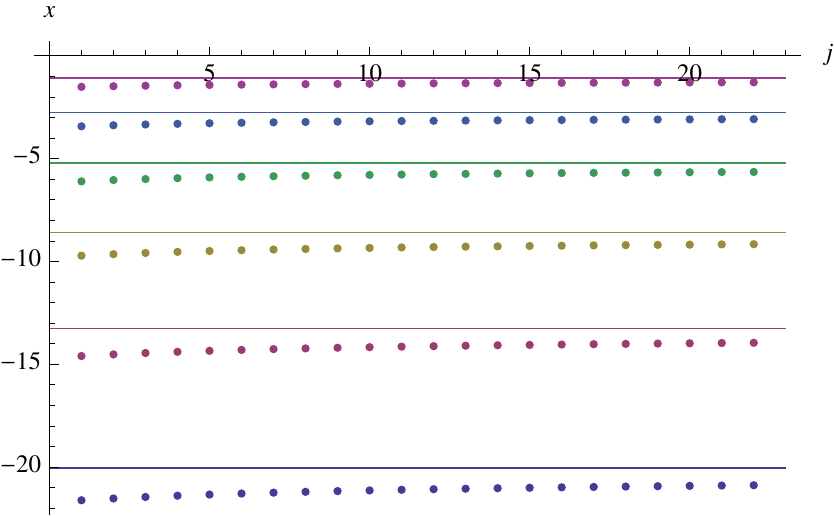}
\end{tabular}
\caption{\textit{Left}: Exceptional zeros of the polynomials
$\XLagI{\al}{m}{m+j}(z)$ for
$m=6$, $\al=3.5$ and $1\leq j\leq 22$. The squares denote the zeros of
$\Lag{\al-1}{m}(-z)$ to which the zeros of $\XLagI{\al}{m}{m+j}(z)$
converge for $j\to\infty$. \textit{Right}: Convergence is better seen by
plotting the position of the negative zeros as a function of $j$. Solid lines
show the limiting values.}
\end{figure}


%

\section{Type II Exceptional Laguerre polynomials}
\subsection{Definition and identities.}
Let $m\geq 0$ be an integer and $\al$ a real number. Let us introduce
the polynomials
\begin{align}
  \eta_{\al,m}(z) := \Lag{-\al}{m}(z).
\end{align}

For a fixed non-negative integer $m$ and a real number $\alpha$ let us define
the following first and second order operators
\begin{align}
  \label{eq:A2def}
  A^{\II}_{\al,m}[y]&:= z \eta_{\al,m} y'+(\al-m)\eta_{\al+1,m} y,\\
  B^{\II}_{\al,m}[y]&:= (y'-y)/\eta_{\al,m},\\
  \cL[y] &:= z y'' + (\al+1-z) y',\\
  \label{eq:L2def}
  \cL^{\II}_{\al,m}[y] &:= \cL[y] + 2z
  (\log\eta_{\al+1,m})' (y-y') -my.
\end{align}
The following factorizations follow from standard Laguerre identities:
\begin{align}
  \label{eq:B2A2}
  \cL_\al &= B^{\II}_{\al,m} A^{\II}_{\al,m} +\al-m,\\
  \label{eq:A2B2}
  \cL^{\II}_{\al,m}&= A^{\II}_{\al+1,m} B^{\II}_{\al+1,m}+\al+1-m.
\end{align}
For a real number $\alpha$ and given integers $n\geq m\geq 0$, we
define the $n^{\text{th}}$ degree type II exceptional Laguerre polynomial by
\begin{align}
  \label{eq:L2ndef}
  \XLagII{\al}{m}{n}(z) &:= -A^{\II}_{\alpha+1,m}[\Lag{\al+1}{j}],\qquad
  j=n-m\geq 0.
\end{align}
Expanding \eqref{eq:A2def} and applying standard identities,
the following dual representations of the type II polynomials hold:
\begin{align}
  \label{eq:L2ndef1}
  \XLagII{\al}{m}{m+j} &= z\Lag{-\al-1}{m} \Lag{\al+2}{j-1}
  +(m-\al-1)\Lag{-\al-2}{m} \Lag{\al+1}{j} \\
  \label{eq:L2ndef2}
  &=-z\Lag{-\al}{m-1} \Lag{\al+1}{j}
  -(\al+1+j)\Lag{-\al-1}{m} \Lag{\al}{j}.
\end{align}
Hence, up to a constant
factor, the type II polynomials extend their classical counterparts, which are
recovered for the particular case $m=0$:
\begin{equation}
  \label{eq:L2m=0}
  \XLagII{\al}{0}{n} = -(1+\al+n) \Lag{\al}{n}.
\end{equation}
The factorizations \eqref{eq:B2A2} and \eqref{eq:A2B2} yield the following
intertwining  relations between the standard Laguerre operator $\cL_\al$ and
the type II $X$-Laguerre operator $\cL^{\II}_{\al,m}$:
\begin{align}
  \label{eq:LIIA}
  &\cL^{\II}_{\al,m} A^{\II}_{\al+1,m}  = A^{\II}_{\al+1,m}\cL_{\al+1},\\
  \label{eq:BLII}
  &B^{\II}_{\al+1,m} \cL^{\II}_{\al,m} = \cL_{\al+1} B^{\II}_{\al+1,m}.
\end{align}
The former relation provides the eigenvalue relation for the type II
exceptional Laguerre polynomials:
\begin{equation}
  \label{eq:IIeigenval}
  \cL^{\II}_{\al,m} \left[\XLagII{\al}{m}{m+j}\right]= -j
\XLagII{\al}{m}{m+j},\qquad   j=0,1,2\dots.
\end{equation}
The latter gives the following ``lowering'' relation between the type
II exceptional Laguerre polynomials and their classical counterparts:
\begin{equation}
  \label{eq:L2L}
  \XLagII{\al}{m}{n}\,{}' - \XLagII{\al}{m}{n} = (1+n-\al) \Lag{-\al-1}{m}
  \Lag{\al+1}{j},\quad j=n-m\geq 0
\end{equation}
In order to find raising and lowering relations for the exceptional
Laguerre polynomials, let us introduce the following first order linear
differential operators
\begin{align}\label{sifact1}
  \hA^{\II}_{\al,m}[y] &:= \frac{\eta_{\al+2,m}}{\eta_{\al+1,m}} \left(
    y' -(\log \eta_{\al+2,m})'  y\right),\\
  \hB^{\II}_{\al,m}[y]&:= \frac{\eta_{\al+1,m}}{\eta_{\al+2,m}}
  \left(zy'+(\al+1-z) y\right) - z(\log \eta_{\al+2,m})' y.
\end{align}
In terms of these operators we have the following \textit{shape-invariant}
factorizations
\begin{align}
  \cL^{\II}_{\al,m} &= \hB^{\II}_{\al,m} \hA^{\II}_{\al,m},\\
  \cL^{\II}_{\al+1,m} &= \hA^{\II}_{\al,m} \hB^{\II}_{\al,m}+1.\label{sifact2}
\end{align}
From these factorizations the following lowering and raising
relations for the exceptional polynomials easily follow:
\begin{align}
  &\hA^{\II}_{\al,m}\left[\XLagII{\al}{m}{n}\right]  =
  \XLagII{\al+1}{m}{n-1},\quad n\geq m+1,\\
  &\hB^{\II}_{\al,m}\left[\XLagII{\al+1}{m}{n}\right] = (n-m)
  \XLagII{\al}{m}{n+1},\quad n\geq m.
\end{align}
The above equations can be conveniently re-written as
\begin{align}
  \label{eq:L2lower}
  & \left( \frac{\XLagII{\al}{m}{n}}{\eta_{\al+2,m}}\right)' =
  \left(\frac{\eta_{\al+1,m}}{\eta_{\al+2,m}}\right)^2
  \frac{\XLagII{\al+1}{m}{n-1}}{\eta_{\al+1,m}}\,,\\
  \label{eq:L2raise}
  &\left( \frac{e^{-z}
      z^{\al+1}}{\eta_{\al+1,m}}\XLagII{\al+1}{m}{n}\right)' = (n-m+1)
  \left(\frac{\eta_{\al+2,m}}{\eta_{\al+1,m}}\right)^2
  \frac{e^{-z} z^{\al}}{\eta_{\al+2,m}}\XLagII{\al}{m}{n+1}\,.
\end{align}

\subsection{Orthogonality}
The type II exceptional Laguerre polynomials are formally orthogonal
with respect to the weight
\begin{equation}
  W^{\II}_{\al,m}(z) :=\frac{ e^{-z} z^\al}{\eta_{\al+1,m}^2}= \frac{ e^{-z}
z^\al}{\left[\Lag{-\al-1}{m}(z)\right]^2}.
\end{equation}
The above weight is the solution $W^{\II}= \hW$ of Pearson's equation
\eqref{eq:pearson} where
\[ p=z,\quad \hat{q} = (1+\al-z) -2 z(\log \eta_{\al+1,m})' \] are extracted
from \eqref{eq:L2def}.  As a consequence, \eqref{eq:IIeigenval} and
Green's formula imply
\begin{equation}
 \label{eq:IIgreen}
 (n_2-n_1)\int \XLagII{\al}{m}{n_2}\,\XLagII{\al}{m}{n_1}\,
 W^{\II}_{\al,m}\, dz = z W^{\II}_{\al,m}\,
 \Wr\left[ \XLagII{\al}{m}{n_2}, \XLagII{\al}{m}{n_1}\right],
\end{equation}
where $n_2>n_1\geq m$ and where
\[\Wr[f,g] = f'g - fg'\]
denotes the usual Wronskian operator. The following
crucial  result is established in \cite[Ch. 6.73]{Sz}.
\begin{prop}
  \label{prop:etazeros}
  For $\al>m-1$ the polynomials $\eta_{\al,m}(z)$ have no zeros
  in $[0,\infty)$.  The number of negative real zeros is either $0$ or $1$
  according to whether  $m$ is even or odd, respectively.
\end{prop}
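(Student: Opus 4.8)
The plan is to write the parameter as $a=-\al$ and study $\eta_{\al,m}=\Lag{a}{m}$ through its dependence on $a$, proving the result by strong induction on the degree $m$. The governing identity is the classical lowering relation $\tfrac{d}{dx}\Lag{a}{m}=-\Lag{a+1}{m-1}$, which here reads $\eta_{\al,m}'=-\eta_{\al-1,m-1}$; since the relevant regime $a<-m$ (i.e. $\al>m$, just beyond the last parameter value at which $\eta_{\al,m}(0)=0$) forces $a+1<-(m-1)$, the derivative falls under the same statement one degree lower, so an induction on $m$ is available. To make it close I would prove a statement sharper than the one requested: for $a<-m$, if $m$ is even then $\eta_{\al,m}(x)>0$ for all real $x$, while if $m$ is odd then $\eta_{\al,m}$ has exactly one real zero, it is negative, and the sign changes there from $+$ to $-$. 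The proposition is then immediate, since in both parities there is no zero in $[0,\infty)$ and the count of negative zeros is $0$ or $1$ according to the parity of $m$.

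Two elementary inputs feed the induction. First, $\eta_{\al,m}(0)=\Lag{a}{m}(0)=(a+1)(a+2)\cdots(a+m)/m!$; for $a<-m$ all $m$ factors are negative, so $\sgn\eta_{\al,m}(0)=(-1)^m$. Second, the leading coefficient of $\Lag{a}{m}$ is $(-1)^m/m!$, so $\eta_{\al,m}(x)\to+\infty$ as $x\to-\infty$ for every $m$, whereas as $x\to+\infty$ it tends to $+\infty$ for $m$ even and to $-\infty$ for $m$ odd. (Incidentally, the same product of negative factors shows every coefficient of $\eta_{\al,m}$ carries the sign $(-1)^m$, which already settles the $[0,\infty)$ part on its own; but I get this for free from the induction.)

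The inductive step splits by parity. For $m$ odd, $m-1$ is even, so by hypothesis $\eta_{\al-1,m-1}>0$ everywhere; hence $\eta_{\al,m}'=-\eta_{\al-1,m-1}<0$ everywhere, $\eta_{\al,m}$ is strictly decreasing, and with the end behaviour above it has a single real zero, which is negative because $\sgn\eta_{\al,m}(0)=-1$. For $m$ even, $m-1$ is odd, so by hypothesis $\eta_{\al-1,m-1}$ has a unique (negative) zero $x_0$ across which it changes from $+$ to $-$; thus $\eta_{\al,m}'$ is negative on $(-\infty,x_0)$ and positive on $(x_0,\infty)$, so $x_0<0$ is the unique critical point and a global minimum. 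Here lies the \emph{main obstacle}: Rolle's theorem and monotonicity bound the number of real zeros only by two, and I must exclude the possibility that this minimum dips to or below $0$ and produces a pair of negative zeros.

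I would dispose of this using the Laguerre equation itself. The polynomial $y=\eta_{\al,m}$ solves $xy''+(a+1-x)y'+my=0$, and evaluating at the critical point $x_0$, where $y'(x_0)=0$, gives
\[ x_0\,y''(x_0)+m\,y(x_0)=0,\qquad\text{i.e.}\qquad y(x_0)=-\tfrac{x_0}{m}\,y''(x_0). \]
Since $x_0<0$ and $x_0$ is a local minimum, $y''(x_0)\ge0$, whence $y(x_0)\ge0$; equality is impossible, since $y(x_0)=0$ together with $y'(x_0)=0$ is vanishing Cauchy data for this second-order equation at the regular point $x_0\ne0$, which would force $y\equiv0$. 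Thus $y(x_0)>0$, the global minimum is strictly positive, and $\eta_{\al,m}>0$ everywhere, closing the even case. The bases $m=0$ ($\eta_{\al,0}\equiv1$) and $m=1$ ($\eta_{\al,1}=1+a-x$, one negative zero when $a<-1$) start the induction, and the sharper statement specialises at once to the proposition.
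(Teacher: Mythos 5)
The paper does not actually prove this proposition: it is simply quoted from Szeg\H{o} \cite[Ch.~6.73]{Sz}, where the count of real zeros of $\Lag{a}{m}$ for general real parameter $a$ is established. Your argument is therefore necessarily a different route, and as a self-contained proof it is correct and rather nice. The induction on $m$ through $\eta_{\al,m}'=-\eta_{\al-1,m-1}$ is legitimate because the parameter constraint propagates ($a<-m$ gives $a+1<-(m-1)$), and the one genuinely delicate point --- excluding a pair of negative zeros in the even case, which Rolle alone cannot do --- is disposed of correctly by evaluating the Laguerre equation at the unique critical point $x_0<0$, giving $y(x_0)=-\tfrac{x_0}{m}y''(x_0)\ge 0$, with equality ruled out by uniqueness of the Cauchy problem at the regular point $x_0\ne 0$. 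What you obtain is in fact sharper than the statement (global positivity for even $m$, a single sign change for odd $m$), and your parenthetical remark that all coefficients of $\eta_{\al,m}$ share the sign $(-1)^m$ already yields the $[0,\infty)$ part on its own.

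The one issue you should surface explicitly rather than absorb silently is the hypothesis. You prove the result for $\al>m$ (your regime $a<-m$), whereas the proposition is stated for $\al>m-1$, and on the intermediate range $m-1<\al\le m$ the statement as printed is false: for $m=1$ one has $\eta_{\al,1}(z)=1-\al-z$, whose only zero $z=1-\al$ lies in $[0,\infty)$ whenever $0<\al\le 1$, and likewise $\Lag{a}{2}$ has a positive zero $(a+2)+\sqrt{a+2}$ for $a\in(-2,-1)$, i.e.\ $\al\in(1,2)$. The hypothesis that makes the proposition true is $\al>m$, which is exactly what you assume and is also the only case the paper ever invokes, since the weight $W^{\II}_{\al,m}$ has $\eta_{\al+1,m}$ in its denominator with $\al>m-1$, i.e.\ parameter $\al+1>m$. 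So your restriction is a correction of an off-by-one in the statement, not a gap in your argument, but it deserves an explicit sentence saying so.
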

\noindent
Thus, assuming $\alpha>m-1$ and restricting the interval
of orthogonality to $[0,\infty)$, $W^{\II}_{\al,m}$ is a weight with
finite moments of all orders, and the RHS of \eqref{eq:IIgreen}
vanishes, whch ensures genuine orthogonality in the $L^2$ sense.

\subsection{Zeros of the type II Laguerre polynomials}
Henceforth, let us assume that $\alpha > m-1$, where $m\geq 0$ is an
integer.  As above, we will call the real positive zeros of
$\XLagII{\al}{m}{n}(z),\; n\geq m$ \emph{regular} and the negative and
complex zeros \emph{exceptional}.  From \eqref{eq:L2ndef1} we have
\begin{equation}
  \label{eq:L2z=0}
  \XLagII{\al}{m}{m+j}(0) = (m+1) \binom{\al+j+1}{j} \binom{m-\al-1}{m+1}
  ,\quad j=0,1,2,\ldots
\end{equation}
Hence, $z=0$ is never a zero of such a polynomial.
\begin{prop}
  The zeros of $\XLagII{\al}{m}{n}(z),\; n\geq m$, are simple.
\end{prop}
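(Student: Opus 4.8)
The plan is to exploit the fact that $y=\XLagII{\al}{m}{m+j}$ is a polynomial solution of the second-order linear differential equation \eqref{eq:IIeigenval}. After multiplying the operator $\cL^{\II}_{\al,m}$ of \eqref{eq:L2def} through by $\eta_{\al+1,m}$ to clear the logarithmic-derivative denominators, the eigenvalue equation takes the form
\begin{equation*}
  z\,\eta_{\al+1,m}\,y'' + P_1\,y' + P_0\,y = 0,
\end{equation*}
with $P_1,P_0$ polynomials and leading coefficient $z\,\eta_{\al+1,m}(z)$. At any point $z_0$ where this leading coefficient does not vanish the equation has a regular point, so a solution with $y(z_0)=y'(z_0)=0$ must vanish identically by the uniqueness theorem for linear ODEs (in the complex-analytic setting, to cover complex zeros). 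Since $\XLagII{\al}{m}{m+j}$ is not the zero polynomial, it can carry a multiple zero only at $z_0=0$ or at a zero of $\eta_{\al+1,m}$.

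The point $z_0=0$ is disposed of immediately: by \eqref{eq:L2z=0} we have $\XLagII{\al}{m}{m+j}(0)\neq0$, so $z=0$ is not a zero at all. Hence the whole difficulty concentrates in the zeros of $\eta_{\al+1,m}$, the remaining singular points of the equation; this is the step I expect to be the main obstacle.

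To settle it I would prove that $\XLagII{\al}{m}{m+j}$ and $\eta_{\al+1,m}$ are coprime, so that no zero of $y$ is a singular point of the ODE. Evaluating the representation \eqref{eq:L2ndef1} at a zero $z_0$ of $\eta_{\al+1,m}$ gives $\XLagII{\al}{m}{m+j}(z_0)=(m-\al-1)\,\eta_{\al+2,m}(z_0)\,\Lag{\al+1}{j}(z_0)$, where $m-\al-1\neq0$ since $\al>m-1$, and I would rule out each factor in turn. First, $\Lag{\al+1}{j}(z_0)\neq0$: the polynomial $\Lag{\al+1}{j}$ has parameter $\al+1>-1$, so all its zeros lie in $(0,\infty)$, whereas by Proposition \ref{prop:etazeros} (applied with $\al+1$ in place of $\al$) every zero $z_0$ of $\eta_{\al+1,m}$ lies outside $[0,\infty)$. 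Second, $\eta_{\al+2,m}(z_0)\neq0$: the Laguerre identity $\eta_{\al+2,m}=\eta_{\al+1,m}-\eta_{\al+1,m-1}$ gives $\eta_{\al+2,m}(z_0)=-\eta_{\al+1,m-1}(z_0)$, and a common zero of the consecutive-degree polynomials $\eta_{\al+1,m}$ and $\eta_{\al+1,m-1}$ is excluded by descending the three-term recurrence down to $\Lag{-\al-1}{0}=1$ (all recurrence coefficients being nonzero for $\al>m-1$).

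Combining these, $\XLagII{\al}{m}{m+j}(z_0)\neq0$ at every zero of $\eta_{\al+1,m}$, so every zero of $\XLagII{\al}{m}{m+j}$ is a regular point of the differential equation and is therefore simple. The delicate point throughout is the analysis at the zeros of $\eta_{\al+1,m}$: the representations \eqref{eq:L2ndef1}--\eqref{eq:L2ndef2}, together with the zero-localization of Proposition \ref{prop:etazeros}, are precisely what separate the positive zeros of the classical factor $\Lag{\al+1}{j}$ from the off-axis zeros of $\eta_{\al+1,m}$, and this separation is the crux of the argument.
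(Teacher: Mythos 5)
Your proof is correct, but it takes a genuinely different route from the paper's. The paper's one-line argument hinges on the lowering relation \eqref{eq:L2L}: at a multiple zero $z_0$ both $\XLagII{\al}{m}{n}$ and its derivative vanish, so \eqref{eq:L2L} forces $\Lag{-\al-1}{m}(z_0)\,\Lag{\al+1}{j}(z_0)=0$, and the two representations \eqref{eq:L2ndef1}--\eqref{eq:L2ndef2}, together with Proposition \ref{prop:etazeros} (the zeros of $\eta_{\al+1,m}$ lie off $[0,\infty)$ while those of $\Lag{\al+1}{j}$ lie in $(0,\infty)$) and the simplicity of classical Laguerre zeros, rule out both factors. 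You instead use the second-order eigenvalue equation \eqref{eq:IIeigenval}: after clearing denominators the leading coefficient is $z\,\eta_{\al+1,m}(z)$, so analytic ODE uniqueness confines any multiple zero to $z=0$ or to a zero of $\eta_{\al+1,m}$, and you eliminate these via \eqref{eq:L2z=0} and a coprimality argument. The second halves of the two proofs essentially coincide (evaluate \eqref{eq:L2ndef1} at a zero of $\eta_{\al+1,m}$ and kill each factor using the disjoint zero locations plus a no-common-zero statement for the denominator polynomials); the real difference is how a double zero is first constrained --- through $y'-y$ in the paper versus through $y''$ in yours. Your version is the generic Bochner-type argument: it transfers verbatim to any exceptional family once coprimality of the polynomial with the weight denominator is established, at the price of invoking ODE uniqueness; the paper's version gets the localization for free from the Darboux intertwining relation and stays entirely algebraic. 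Your recurrence-descent step is sound as written: the coefficients $k-\al-1$, $1\le k\le m-1$, encountered on the way down to $\Lag{-\al-1}{0}=1$ are all nonzero precisely because of the standing hypothesis $\al>m-1$.
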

\begin{proof}
  This follows by \eqref{eq:L2ndef1} \eqref{eq:L2ndef2} \eqref{eq:L2L}
  and the fact that the zeros of the classical Laguerre polynomials are simple.
\end{proof}
\begin{prop}
  \label{prop:L2zeros}
  The polynomial $\XLagII{\al}{m}{n}(z),\; n\geq m$, has exactly $n-m$
  regular
  zeros.
\end{prop}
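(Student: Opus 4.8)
The plan is to control the number of \emph{regular} (that is, positive) zeros of $y:=\XLagII{\al}{m}{m+j}$ from both sides: at least $j=n-m$ via a change-of-sign argument, and at most $j$ via a Rolle argument. Since $\deg y=m+j$, since $z=0$ is never a zero by \eqref{eq:L2z=0}, and since the zeros are simple by the preceding proposition, the two bounds together pin down exactly $j$ regular zeros (and force the remaining $m$ zeros to lie outside $(0,\infty)$).

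For the lower bound I would argue exactly as in the Type I case, using the dual representation \eqref{eq:L2ndef2}, whose two polynomial factors are $\Lag{-\al}{m-1}=\eta_{\al,m-1}$ and $\Lag{-\al-1}{m}=\eta_{\al+1,m}$. Because $\al>m-1$, Proposition~\ref{prop:etazeros} guarantees that both of these are sign-definite on $[0,\infty)$. Evaluating \eqref{eq:L2ndef2} at the $j$ positive zeros of $\Lag{\al+1}{j}$ annihilates the first term and leaves a quantity whose sign is that of the sign-definite factor $\eta_{\al+1,m}$ times $\sgn\Lag{\al}{j}$. Invoking the interlacing of the zeros of $\Lag{\al}{j}$ and $\Lag{\al+1}{j}$, these values alternate in sign; comparing with the sign of $y(0)$ read off from \eqref{eq:L2z=0} anchors one further sign change to the left of the smallest zero of $\Lag{\al+1}{j}$. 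This exhibits $j$ sign changes in $(0,\infty)$, hence at least $j$ regular zeros.

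For the upper bound, start from the lowering relation \eqref{eq:L2L}, $y'-y=(1+n-\al)\,\eta_{\al+1,m}\Lag{\al+1}{j}$, and note the constant $1+n-\al$ cannot vanish, since $y'=y$ is impossible for a nonzero polynomial. Set $g:=e^{-z}y$, so that
\[
  g'=(1+n-\al)\,e^{-z}\,\eta_{\al+1,m}\,\Lag{\al+1}{j}.
\]
Since $\al>m-1$, Proposition~\ref{prop:etazeros} shows $\eta_{\al+1,m}$ has no zeros in $(0,\infty)$, so $g'$ has there exactly the $j$ simple zeros of $\Lag{\al+1}{j}$. If $y$ (equivalently $g$) had $P$ positive zeros $z_1<\dots<z_P$, Rolle yields $P-1$ zeros of $g'$ in $(z_1,z_P)$; and because $g(z)\to0$ as $z\to+\infty$ while $g(z_P)=0$ and $g\not\equiv0$ beyond $z_P$, there is one more critical point of $g$ in $(z_P,\infty)$. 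Thus $g'$ has at least $P$ zeros in $(0,\infty)$, forcing $P\le j$.

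The delicate point is precisely this upper bound: a bare Rolle count only gives $P\le j+1$, and it is the additional zero of $g'$ extracted from the decay $g\to0$ at infinity that sharpens the estimate to $P\le j$. Equally important is that the standing hypothesis $\al>m-1$, through Proposition~\ref{prop:etazeros}, keeps every $\eta$-factor free of zeros on $(0,\infty)$; without this the sign bookkeeping in both halves of the argument would break down.
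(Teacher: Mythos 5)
Your proof is correct, but it follows a genuinely different route from the paper's. The paper establishes both bounds by induction on $j$, shifting the parameter $\al\to\al+1$ at each step: the lower bound comes from the raising relation \eqref{eq:L2raise} plus Rolle's theorem applied between consecutive regular zeros of $\XLagII{\al+1}{m}{n}$, and the upper bound from the lowering relation \eqref{eq:L2lower} together with the base case $\XLagII{\al}{m}{m}=(m-1-\al)\,\eta_{\al+1,m}$, which has no nonnegative zeros by Proposition~\ref{prop:etazeros}. You avoid induction entirely. Your lower bound is a direct sign-change count obtained by evaluating \eqref{eq:L2ndef2} at the zeros of $\Lag{\al+1}{j}$ and at $z=0$ --- precisely the technique the paper uses for the Type~I interlacing proposition but does not repeat here --- and as a by-product it yields the sharper statement that the regular zeros interlace with those of $\Lag{\al+1}{j}$. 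Your upper bound, via $g=e^{-z}y$ and the lowering identity \eqref{eq:L2L}, is likewise a clean one-shot argument; the observation that the decay of $g$ at $+\infty$ supplies the extra critical point needed to improve $P\le j+1$ to $P\le j$ is exactly the right point to isolate, and your justification that the constant in \eqref{eq:L2L} cannot vanish (else $y'=y$ for a nonzero polynomial) is sound. The paper's route has the advantage of running in parallel with the Jacobi case and of reusing the shape-invariant factorization machinery; yours buys the interlacing information and dispenses with the double induction.
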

\begin{proof}
  We prove the existence of at least $j=n-m$ regular zeros by
  induction on $j$.  The case $j=0$ is trivial.  Suppose now that the
  proposition has been established for $j\geq 0$ and $\al>m-1$.  Since
  $\al+1>m-1$, the proposition is also true for
  $\XLagII{\al+1}{m}{n}$. Let $\zeta_1,\ldots, \zeta_k,\; k\geq j$,
  be the regular zeros of $\XLagII{\al+1}{m}{n}(z)$.  By
  \eqref{eq:L2raise} and Rolle's theorem, $\XLagII{\al}{m}{n+1}(z)$
  has at least one zero in each of the intervals $(\zeta_1,\zeta_2),
  (\zeta_2,\zeta_3),\ldots, (\zeta_{k-1},\zeta_k)$.  Also by
  \eqref{eq:L2raise}, there is a zero in $(0,\zeta_1)$ and a zero in
  $(\zeta_i,\infty)$, for a total of at least $k+1$ zeros.

  We conclude by showing that $j$ is also an upper bound for the
  number of regular zeros.  The proof is again by induction on $j=n-m$.
  By \eqref{eq:L2ndef1},
  \begin{equation}
    \label{eq:L2ground}
    \XLagII{\al}{m}{m} = (m-1-\al)\Lag{-\al-1}{m}  = (m-1-\al)\eta_{\al+1,m}.
  \end{equation}
  by Proposition \ref{prop:etazeros}, the latter has no real,
  non-negative zeros. The lowering relation \eqref{eq:L2lower} shows
  that between two regular zeros of $\XLagII{\al}{m}{n}$  at least one
  zero of $\XLagII{\al+1}{m}{n-1}(z)$ lies.  Hence, if we assume that the latter
  has at most $j-1$ regular zeros, then the former has at most $j$
  regular zeros.
\end{proof}

\begin{prop}
  The type II polynomial $\XLagII{\al}{m}{n},\; n\geq m$, has either 0 or 1
  negative zeros, according to whether $m$ is even or odd.
\end{prop}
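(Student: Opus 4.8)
The plan is to determine the number of negative zeros by studying the monotonicity of $e^{-z}\XLagII{\al}{m}{n}(z)$ on the half-line $(-\infty,0]$ and then fixing the exact count from the signs at the two endpoints. By Proposition~\ref{prop:L2zeros} the polynomial $\XLagII{\al}{m}{n}$, with $n=m+j$, has exactly $j$ regular zeros, and by \eqref{eq:L2z=0} it does not vanish at the origin; since its degree is $m+j$, exactly $m$ zeros remain, distributed among negative real and complex-conjugate values. Thus it suffices to show that the number of negative real zeros is $0$ when $m$ is even and $1$ when $m$ is odd.

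First I would differentiate the product $e^{-z}\XLagII{\al}{m}{n}$. Using the lowering relation \eqref{eq:L2L} one obtains
\[
\bigl(e^{-z}\XLagII{\al}{m}{n}(z)\bigr)' = c\, e^{-z}\,\eta_{\al+1,m}(z)\,\Lag{\al+1}{j}(z),
\]
where $c$ is a fixed \emph{nonzero} constant: were $c=0$ we would have $\XLagII{\al}{m}{n}{}'=\XLagII{\al}{m}{n}$, forcing $e^{-z}\XLagII{\al}{m}{n}$ to be constant and hence identically zero, which is absurd. On $(-\infty,0]$ we have $e^{-z}>0$ and, since $\al+1>-1$, also $\Lag{\al+1}{j}(z)>0$; hence the sign of the derivative there coincides with that of $c\,\eta_{\al+1,m}(z)$. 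Now I invoke Proposition~\ref{prop:etazeros}, applicable because $\al+1>m-1$: the polynomial $\eta_{\al+1,m}=\Lag{-\al-1}{m}$ has no zeros in $[0,\infty)$, and has $0$ negative zeros if $m$ is even and exactly one (necessarily simple) negative zero if $m$ is odd. Consequently $e^{-z}\XLagII{\al}{m}{n}$ is strictly monotone on $(-\infty,0]$ when $m$ is even, and unimodal (strictly monotone on each side of its single critical point) when $m$ is odd; in the two cases it has at most $1$ and at most $2$ zeros on $(-\infty,0)$, respectively.

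To pin down the exact number I would compare the signs at the endpoints. From \eqref{eq:L2z=0} one reads off $\sgn \XLagII{\al}{m}{m+j}(0)=(-1)^{m+1}$, using $\al>m-1$ so that $\binom{m-\al-1}{m+1}$ is a product of $m+1$ negative factors divided by $(m+1)!$; meanwhile the leading coefficient extracted from \eqref{eq:L2ndef2} shows $\XLagII{\al}{m}{n}(z)\to-\infty$ as $z\to-\infty$, so the sign at $-\infty$ is $-1$ regardless of the parity of $m$. When $m$ is even the two endpoint signs agree, so the monotone function has no zero in $(-\infty,0)$. When $m$ is odd they differ, so the unimodal function has an odd number of (simple) zeros there, which together with the bound "at most $2$" forces exactly one. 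This yields the claim.

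The main obstacle I anticipate is the bookkeeping that makes the middle step rigorous: one must check that the differentiated identity genuinely factors, on the negative axis, as a fixed-sign quantity times $\eta_{\al+1,m}$, so that the critical points of $e^{-z}\XLagII{\al}{m}{n}$ on $(-\infty,0)$ are precisely the negative zeros of $\eta_{\al+1,m}$, and that the lone critical point in the odd case is nondegenerate (hence the function is truly unimodal rather than merely having a flat inflection). The endpoint-sign computation and the appeal to simplicity of the zeros are then routine given the earlier results.
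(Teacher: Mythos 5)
Your proof is correct, and for the harder half (the upper bound on the number of negative zeros) it takes a genuinely different route from the paper. Both arguments obtain the lower bound the same way, by comparing $\sgn \XLagII{\al}{m}{n}(0)=(-1)^{m+1}$ from \eqref{eq:L2z=0} with the sign $-1$ as $z\to-\infty$. For the upper bound the paper iterates the shape-invariant lowering relation \eqref{eq:L2lower}: Rolle's theorem transfers real zeros from $\XLagII{\al}{m}{m+j}$ to $\XLagII{\al+1}{m}{m+j-1}$, and after $j$ steps the negative-zero count is bounded by that of $\XLagII{\al+j}{m}{m}\propto \eta_{\al+j+1,m}$, to which Proposition~\ref{prop:etazeros} applies. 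You instead apply the single identity \eqref{eq:L2L} once, rewritten as $(e^{-z}\XLagII{\al}{m}{n})'=c\,e^{-z}\,\eta_{\al+1,m}\,\Lag{\al+1}{j}$, to locate all critical points of $e^{-z}\XLagII{\al}{m}{n}$ on $(-\infty,0]$ at the negative zeros of $\eta_{\al+1,m}$ --- of which Proposition~\ref{prop:etazeros} supplies $0$ or $1$ --- and then count zeros by piecewise strict monotonicity plus the endpoint signs. This avoids the induction on $j$ altogether and is more economical; the paper's descent has the advantage of reusing the machinery already set up for Proposition~\ref{prop:L2zeros}. Two remarks. First, the nondegeneracy issue you flag at the end is not actually needed: whatever the multiplicity of the critical point, $e^{-z}\XLagII{\al}{m}{n}$ is strictly monotone on each of the two closed subintervals it bounds, so each contributes at most one zero, and the endpoint-sign parity (together with the simplicity of the zeros, proved earlier) forces exactly one zero in the odd case. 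Second, the constant $(1+n-\al)$ printed in \eqref{eq:L2L} cannot be right as stated (compare leading coefficients, or set $m=0$ against \eqref{eq:L2m=0}); it should be $1+j+\al-m$, which is positive when $\al>m-1$. Your argument is immune to this, since you only use that the constant is a nonzero scalar, which you establish independently.
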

\begin{proof}
  Let
  \[\epsilon_{m} =
  \begin{cases}
    0 & m \text{ even}\\
    1 & m \text{ odd.}
  \end{cases}\] Let $\epsilon_{\al,m,n}$ be the number of negative
  zeros of $\XLagII{\al}{m}{n}$.  We wish to show that
  \[ \epsilon_{\al,m,n} = \epsilon_m,\quad \al>m-1,\; n\geq m.\]
  Suppose that $m$ is odd.  By \eqref{eq:L2z=0},
  \[ \sgn \XLagII{\al}{m}{n}(0) = (-1)^{m+1}.\]
  By \eqref{eq:L2ndef1},
  \[ \XLagII{\al}{m}{n} =  \frac{m-1-j-\al}{m! j!} (-z)^{m+j} +
  \text{lower degree terms.} \]
  As a consequence,
  \[ \lim_{z\to -\infty} \sgn \XLagII{\al}{m}{n}(z) = -1 .\] Hence,
  $\epsilon_{\al,m,n} \geq 1$ if $m$ is odd, and therefore
  $\epsilon_{\al,m,n} \geq \epsilon_m$.

  By Proposition \ref{prop:L2zeros}, $\XLagII{\al}{m}{m+j}$ has
  $j+\epsilon_{\alpha,m,n}$ real zeros.  Hence, by \eqref{eq:L2lower},
  $\XLagII{\al+1}{m}{m+j-1}$ has at least $j-1+\epsilon_{\alpha,m,n}$
  real zeros.  Continuing inductively, $\XLagII{\al+j}{m}{m}$ has at
  least $\epsilon_{\al,m,n}$ real zeros.  Hence, by
  \eqref{eq:L2ground} and Proposition \ref{prop:etazeros},
  $\epsilon_{\al,m,n} \leq \epsilon_m$, as was to be shown.
\end{proof}

For the type II exceptional Laguerre polynomials, a Heine-Mehler
type formula also holds:
\begin{prop}
  As $n\to \infty$, we have
  \begin{equation}
    \label{eq:hmL2}
    \XLagII{\al}{m}{n}(z/n)n^{-\al-1} \rightrightarrows
    -\binom{m-1-\al}{m}
    z^{-\al/2} J_\al(2\sqrt{z}).
  \end{equation}
\end{prop}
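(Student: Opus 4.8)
The plan is to mirror the proof of the type I formula \eqref{eq:hmL1}, working from the dual representation \eqref{eq:L2ndef2} rather than \eqref{eq:L2ndef1}. I prefer \eqref{eq:L2ndef2} because it exhibits a single classical Laguerre factor of parameter $\al$, namely $\Lag{\al}{j}$, whose rescaled limit is governed by $J_\al$ --- precisely the Bessel function on the right of \eqref{eq:hmL2}. Representation \eqref{eq:L2ndef1}, by contrast, would produce $J_{\al+1}$ and $J_{\al+2}$ and force an appeal to Bessel recurrences to reassemble $J_\al$.

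Concretely, writing $n=m+j$ I would substitute $z\mapsto z/n$ in \eqref{eq:L2ndef2} and multiply by $n^{-\al-1}$, producing two terms to be handled separately. The first term,
\[ -\frac{z}{n}\,\Lag{-\al}{m-1}(z/n)\,\big[\Lag{\al+1}{j}(z/n)\,n^{-\al-1}\big], \]
tends to $0$ uniformly on compact sets: the bracketed factor converges by Heine--Mehler with parameter $\al+1$, the polynomial $\Lag{-\al}{m-1}(z/n)$ tends to the constant $\Lag{-\al}{m-1}(0)$, and the prefactor $z/n$ tends to $0$. For the second term I would write it as
\[ -\frac{\al+1+j}{n}\,\Lag{-\al-1}{m}(z/n)\,\big[\Lag{\al}{j}(z/n)\,n^{-\al}\big] \]
and pass to the limit factor by factor: $(\al+1+j)/n\to 1$ since $j=n-m$, the polynomial $\Lag{-\al-1}{m}(z/n)$ tends to $\Lag{-\al-1}{m}(0)=\binom{m-1-\al}{m}$ by \eqref{eq:Lat0}, and the bracket tends to $z^{-\al/2}J_\al(2\sqrt z)$. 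Collecting the two contributions gives \eqref{eq:hmL2}.

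The one step that deserves care --- and the main, if minor, obstacle --- is the assertion that $\Lag{\al}{j}(z/n)\,n^{-\al}\rightrightarrows z^{-\al/2}J_\al(2\sqrt z)$, since the classical formula \eqref{eq:hmclassic} is stated with the argument $z/j$ and the normalization $j^{-\al}$, whereas here the scaling uses the total degree $n=m+j$. As $m$ is fixed we have $j/n\to 1$, so the two scalings coincide in the limit; to make this rigorous I would set $w=zj/n$, so that $z/n=w/j$, note that $w\to z$ uniformly on compacts, and combine the uniform convergence in \eqref{eq:hmclassic} over a slightly enlarged compact with the continuity of the entire function $z^{-\al/2}J_\al(2\sqrt z)$, finally absorbing the factor $(j/n)^\al\to 1$ to adjust the normalization; the same remark covers the bracket of the first term, with $\al$ replaced by $\al+1$. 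As a consistency check, the case $m=0$ reduces \eqref{eq:L2ndef2} to \eqref{eq:L2m=0}, and the prefactor $-(1+\al+n)/n\to -1$ reproduces both the classical Heine--Mehler limit and the claimed constant $-\binom{-1-\al}{0}=-1$.
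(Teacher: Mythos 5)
Your proof is correct and follows essentially the same route as the paper's: both start from the dual representation \eqref{eq:L2ndef2}, kill the first term via the extra factor $z/n$ together with Heine--Mehler at parameter $\al+1$, and extract $-\binom{m-1-\al}{m}z^{-\al/2}J_\al(2\sqrt z)$ from the second term using $\Lag{-\al-1}{m}(0)=\binom{m-\al-1}{m}$, the only cosmetic difference being that the paper proves the limit in the $z/j$, $j^{-\al-1}$ normalization first and then passes to the $z/n$, $n^{-\al-1}$ one by uniform continuity, whereas you convert the classical Heine--Mehler formula to the $n$-scaling at the outset. Your handling of that rescaling step is, if anything, a bit more explicit than the paper's.
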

\begin{proof}
  Multiplying \eqref{eq:L2ndef2} by $j^{-1-\al}$, replacing  $z\to z/j$,
  and applying the classical
  Heine-Mehler formula, we get
  \begin{gather*}
    j^{-1-\al} \XLagII{\al}{m}{m+j}(z/j) +z^{\al/2} J_\al(2\sqrt{z})
    \Lag{-\al-1}{m}(z/j)  \rightrightarrows 0,\quad j\to \infty.
  \end{gather*}
  The polynomials $\XLagII{\al}{m}{m+j}$ and $\Lag{-\al-1}{m}$ are uniformly
  continuous on compact subsets of $\Cset$.  Setting $n=m+j$, we have
  by uniform continuity on compact subsets
  \begin{gather*}
    \Lag{-\al-1}{m}(z/n)\rightrightarrows \binom{m-\al-1}{m} \\
    j^{-1-\al} \XLagII{\al}{m}{m+j}(z/j) -n^{-1-\al}
  \XLagII{\al}{m}{n}(z/n)
    \rightrightarrows 0
  \end{gather*}
  as $n\to \infty$.  Equation \eqref{eq:Lz=0} is needed to establish
  the first statement.
\end{proof}
\noindent
Note that, as a consequence of \eqref{eq:L2m=0}, the above assertion reduces to
the classical Heine-Mehler formula for $m=0$.

\begin{cor} Let $0<\tz_1< \tz_2< \tz_3< \cdots$ denote the
  positive zeros of the Bessel function of the first kind $J_\al(z)$ arranged
in an increasing order
  and let $0<z_{n,1}< z_{n,2} <\cdots < z_{n,m-n}$ be the regular
  zeros of $\XLagII{\al}{m}{n}$ also arranged in increasing order.Then,
  \begin{equation}
    \label{eq:asymbessel2}
    \lim_{n\to\infty} n z_{n,i} = \tz^2_i/4
\end{equation}
\end{cor}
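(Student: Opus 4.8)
The plan is to transcribe the argument used for the type I case, combining the generalized Heine--Mehler formula \eqref{eq:hmL2} with Hurwitz's theorem. The key observation is that under the rescaling $z\mapsto z/n$ the regular zeros $z_{n,i}$ of $\XLagII{\al}{m}{n}$ are carried to the points $n z_{n,i}$, which are precisely the positive real zeros of the rescaled functions
\[ g_n(z) := n^{-\al-1}\XLagII{\al}{m}{n}(z/n). \]
Thus it suffices to track the positive zeros of $g_n$ as $n\to\infty$.

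By \eqref{eq:hmL2} we have $g_n \rightrightarrows g$ uniformly on compact subsets of $\Cset$, where
\[ g(z) = -\binom{m-1-\al}{m}\, z^{-\al/2} J_\al(2\sqrt{z}). \]
First I would record two facts about $g$. The prefactor $\binom{m-1-\al}{m}$ equals $\frac{1}{m!}\prod_{k=0}^{m-1}(k-\al)$, none of whose factors vanishes under the standing hypothesis $\al>m-1$; hence $g\not\equiv0$. Moreover, expanding the Bessel series gives
\[ z^{-\al/2} J_\al(2\sqrt{z}) = \sum_{k\geq 0}\frac{(-1)^k}{k!\,\Gamma(k+\al+1)}\, z^k, \]
which is an entire function of $z$, so there is no difficulty with the branch cut of $z^{-\al/2}$ and $g$ is holomorphic on all of $\Cset$. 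Since $J_\al(2\sqrt{z})=0$ if and only if $2\sqrt{z}=\tz_i$ for some $i$, the positive real zeros of $g$ are exactly the points $z=\tz_i^2/4$.

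Now I would apply Hurwitz's theorem. Because the positive zeros $\tz_i$ of $J_\al$ are simple, each $\tz_i^2/4$ is a simple zero of $g$. Fix $N$ and a radius $R>\tz_N^2/4$; on the disk $|z|<R$ the function $g$ has exactly the $N$ zeros $\tz_1^2/4<\cdots<\tz_N^2/4$, all simple. Hurwitz's theorem then guarantees that for all sufficiently large $n$ the function $g_n$ has exactly $N$ zeros in this disk, one converging to each $\tz_i^2/4$. Since these are the $N$ smallest positive zeros of $g_n$, namely $n z_{n,1}<\cdots<n z_{n,N}$, the strictly increasing ordering of both sequences forces $n z_{n,i}$ to be the zero converging to $\tz_i^2/4$. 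Letting $N\to\infty$ yields \eqref{eq:asymbessel2} for every $i$.

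The only point requiring genuine care is this last index-matching: Hurwitz localizes zeros but does not by itself pair the $i$-th regular zero with the $i$-th Bessel zero. I expect this to be the main (though mild) obstacle, and it is handled as above by combining the simplicity of the Bessel zeros, the strict monotonicity of both ordered sequences, and the fact (Proposition \ref{prop:L2zeros}) that $\XLagII{\al}{m}{n}$ has exactly $n-m$ regular zeros, so that no positive zeros are lost in the limit. Everything else is a direct transcription of the type I computation.
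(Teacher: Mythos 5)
Your proposal is correct and follows exactly the paper's route: the paper's proof of this corollary is the one-line observation that the result follows from the generalized Heine--Mehler formula \eqref{eq:hmL2} together with Hurwitz's theorem, and your argument is precisely that, with the supporting details (nonvanishing of the constant, entirety of $z^{-\al/2}J_\al(2\sqrt z)$, and the index-matching between ordered zero sequences) spelled out. The care you take over the index matching is a genuine point the paper leaves implicit, but it does not constitute a different method.
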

\begin{proof}
The above result follows from \eqref{eq:hmL2} and Hurwitz's
theorem.
\end{proof}

Away from the interval of orthogonality, we can describe the asymptotic
behaviour as follows:
\begin{prop}
  As $j\to \infty$ we have
  \[-(\al+1+j)^{-1} \frac{\XLagII{\al}{m}{m+j}(z)}{\Lag{\al}{j}(z)} =
  \eta_{\al+1,m}(z)
   + O(j^{-1/2})\] on compact subsets of
  $\Cset/[0,\infty)$.
\end{prop}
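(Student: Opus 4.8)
The plan is to start directly from the dual representation \eqref{eq:L2ndef2}, which writes $\XLagII{\al}{m}{m+j}$ as a linear combination of products of a fixed ($j$-independent) polynomial and a classical Laguerre polynomial. Dividing through by $\Lag{\al}{j}(z)$ and multiplying by the prefactor $-(\al+1+j)^{-1}$, the second summand collapses exactly to $\Lag{-\al-1}{m}(z)=\eta_{\al+1,m}(z)$, which is the claimed limit. The entire content of the proposition is therefore that the first summand,
\[
(\al+1+j)^{-1}\, z\,\Lag{-\al}{m-1}(z)\,\frac{\Lag{\al+1}{j}(z)}{\Lag{\al}{j}(z)},
\]
is $O(j^{-1/2})$ uniformly on compact subsets of $\Cset\setminus[0,\infty)$.

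Since $z\,\Lag{-\al}{m-1}(z)$ is a fixed polynomial, it is bounded on any such compact set, so everything reduces to controlling the mixed-parameter quotient $\Lag{\al+1}{j}/\Lag{\al}{j}$. First I would emphasize that the qualitative outer ratio asymptotics \eqref{eq:Lora} is \emph{not} sufficient here: it yields convergence but no rate, and the quotient at hand is between polynomials of the \emph{same degree} but \emph{different parameter}, whose ratio in fact grows. The natural tool is Perron's strong asymptotic formula for Laguerre polynomials (Szeg\H{o}, \cite{Sz}, Theorem 8.22.3), valid uniformly on compact subsets of $\Cset\setminus[0,\infty)$,
\[
\Lag{\al}{n}(z)=\tfrac{1}{2}\pi^{-1/2}e^{z/2}(-z)^{-\al/2-1/4}n^{\al/2-1/4}e^{2(-nz)^{1/2}}\bigl(1+O(n^{-1/2})\bigr).
\]
Taking the quotient for parameters $\al+1$ and $\al$ at common degree $j$, all the $\al$-independent factors (the $e^{z/2}$ and the dominant exponential $e^{2(-jz)^{1/2}}$) cancel, leaving
\[
\frac{\Lag{\al+1}{j}(z)}{\Lag{\al}{j}(z)}=(-z)^{-1/2}\,j^{1/2}\,\bigl(1+O(j^{-1/2})\bigr)
\]
uniformly on compacta of $\Cset\setminus[0,\infty)$, where $(-z)^{-1/2}$ stays bounded because such compacta are bounded away from $0$. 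Hence $(\al+1+j)^{-1}\Lag{\al+1}{j}/\Lag{\al}{j}=O(j^{-1/2})$, and after multiplying by the bounded factor $z\,\Lag{-\al}{m-1}(z)$ the first summand is $O(j^{-1/2})$, which finishes the argument.

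The main obstacle is precisely the passage from the qualitative ratio statement \eqref{eq:Lora} to the quantitative $j^{1/2}$ growth of the mixed-parameter quotient. One could try to avoid citing Perron directly by using the contiguity relation $\Lag{\al}{j}=\Lag{\al+1}{j}-\Lag{\al+1}{j-1}$ to write $\Lag{\al}{j}/\Lag{\al+1}{j}=1-\Lag{\al+1}{j-1}/\Lag{\al+1}{j}$ and then estimating the rate at which the consecutive-degree ratio approaches $1$; but quantifying that rate again requires asymptotics finer than \eqref{eq:Lora}, so invoking Perron's formula remains the cleanest route. Throughout, the \emph{uniformity} in $z$ on compact subsets---which is what gives the $O(j^{-1/2})$ claim its meaning---is already built into Perron's theorem and must be carried through each cancellation.
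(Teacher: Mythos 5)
Your proposal is correct and follows essentially the same route as the paper: both divide the dual representation \eqref{eq:L2ndef2} by $-(\al+1+j)\Lag{\al}{j}$ and reduce the claim to the estimate $(-z)^{1/2}\Lag{\al+1}{j}/\Lag{\al}{j}=O(j^{1/2})$ uniformly on compacta of $\Cset\setminus[0,\infty)$. The only difference is that you explicitly justify that estimate via Perron's formula, whereas the paper simply asserts it as a known outer ratio asymptotic; your added detail is accurate.
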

\begin{proof}
  For the outer ratio asymptotics of the classical Laguerre
  polynomials, we have
  \[ (-z)^{t/2} \frac{\Lag{\al+t}{j}(z)}{
    \Lag{\al}{j}(z)} = O(j^{t/2}),\quad j\to \infty \] uniformly on compact
subsets of $\Cset/[0,\infty)$.  The desired
  conclusion now follows by \eqref{eq:L2ndef2}.
\end{proof}

\begin{prop}
  As $n\to \infty$ the exceptional zeros of $\XLagII{\al}{m}{n},\; n\geq m$,
  converge to the zeros of $\eta_{\al+1,m}(z)=\Lag{-\al-1}{m}(z)$.
\end{prop}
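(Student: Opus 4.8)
The plan is to reproduce the Type~I argument, this time using the outer ratio asymptotics established in the preceding proposition in place of the ratio limit \eqref{eq:Lora}. Concretely, set
\[
f_j(z) := -(\al+1+j)^{-1}\,\frac{\XLagII{\al}{m}{m+j}(z)}{\Lag{\al}{j}(z)},
\]
so that the preceding proposition says $f_j \rightrightarrows \eta_{\al+1,m}$ uniformly on compact subsets of $\Cset\setminus[0,\infty)$. The key observation is that $\Lag{\al}{j}$ has all of its zeros in $[0,\infty)$, so on the slit plane $\Cset\setminus[0,\infty)$ the rational function $f_j$ is holomorphic, and its zeros there are exactly the zeros of $\XLagII{\al}{m}{m+j}$ lying off $[0,\infty)$, i.e. precisely the $m$ exceptional zeros (all regular zeros being in $(0,\infty)$ and $z=0$ never a zero). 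The nonzero constant factor $-(\al+1+j)^{-1}$ does not affect their location.

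First I would record the bookkeeping that makes the count exact. By Proposition~\ref{prop:L2zeros} the polynomial $\XLagII{\al}{m}{m+j}$ has exactly $j$ regular zeros, and since its degree is $m+j$ this leaves exactly $m$ exceptional zeros, whether negative real or genuinely complex. On the limit side, $\eta_{\al+1,m}=\Lag{-\al-1}{m}$ is a fixed nonzero polynomial of degree $m$; by Proposition~\ref{prop:etazeros} (applied with parameter $\al+1>m-1$) none of its $m$ zeros lie in $[0,\infty)$. Thus both $f_j$ and its limit carry exactly $m$ zeros in the slit plane.

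Next I would invoke Hurwitz's theorem. Let $w_1,\dots,w_r$ be the distinct zeros of $\eta_{\al+1,m}$, with multiplicities $m_1,\dots,m_r$ summing to $m$, and choose pairwise disjoint closed disks $D_i\subset\Cset\setminus[0,\infty)$ centred at the $w_i$. Because $f_j\rightrightarrows\eta_{\al+1,m}$ uniformly on the compact set $\bigcup_i D_i$ and the limit is not identically zero, Hurwitz's theorem gives, for all large $j$, exactly $m_i$ zeros of $f_j$ in each $D_i$, hence $\sum_i m_i=m$ zeros in $\bigcup_i D_i$. Since $f_j$ has precisely $m$ exceptional zeros in total, all of them must lie in $\bigcup_i D_i$ once $j$ is large; letting the radii of the disks shrink then shows the exceptional zeros converge (with multiplicity) to the zeros of $\eta_{\al+1,m}$, as claimed.

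The main obstacle is the possibility that an exceptional zero escapes to infinity, or drifts toward the cut $[0,\infty)$, as $j\to\infty$, which would break the matching and leave some $w_i$ unaccounted for. This is exactly what the exact degree count rules out: Hurwitz already accounts for all $m$ zeros near the fixed zeros of $\eta_{\al+1,m}$, and since there are only $m$ exceptional zeros to begin with, none can wander off to infinity or accumulate on $[0,\infty)$. The $O(j^{-1/2})$ error rate in the preceding proposition is uniform on each $D_i$ and plays no role beyond guaranteeing the required uniform convergence.
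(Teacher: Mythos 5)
Your proof is correct and follows essentially the same route as the paper, which simply states that the result ``follows by the preceding Proposition and by Hurwitz's theorem.'' You have merely made explicit the bookkeeping the paper leaves implicit (the exact count of $m$ exceptional zeros and the choice of disjoint disks around the zeros of $\eta_{\al+1,m}$), which is a faithful elaboration rather than a different argument.
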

\begin{proof}
  The desired conclusion follows by the preceding Proposition and by
  Hurwitz's theorem.
\end{proof}

\begin{figure}[h]\label{fig:Lag2zeros}
\includegraphics[width=0.75\textwidth]{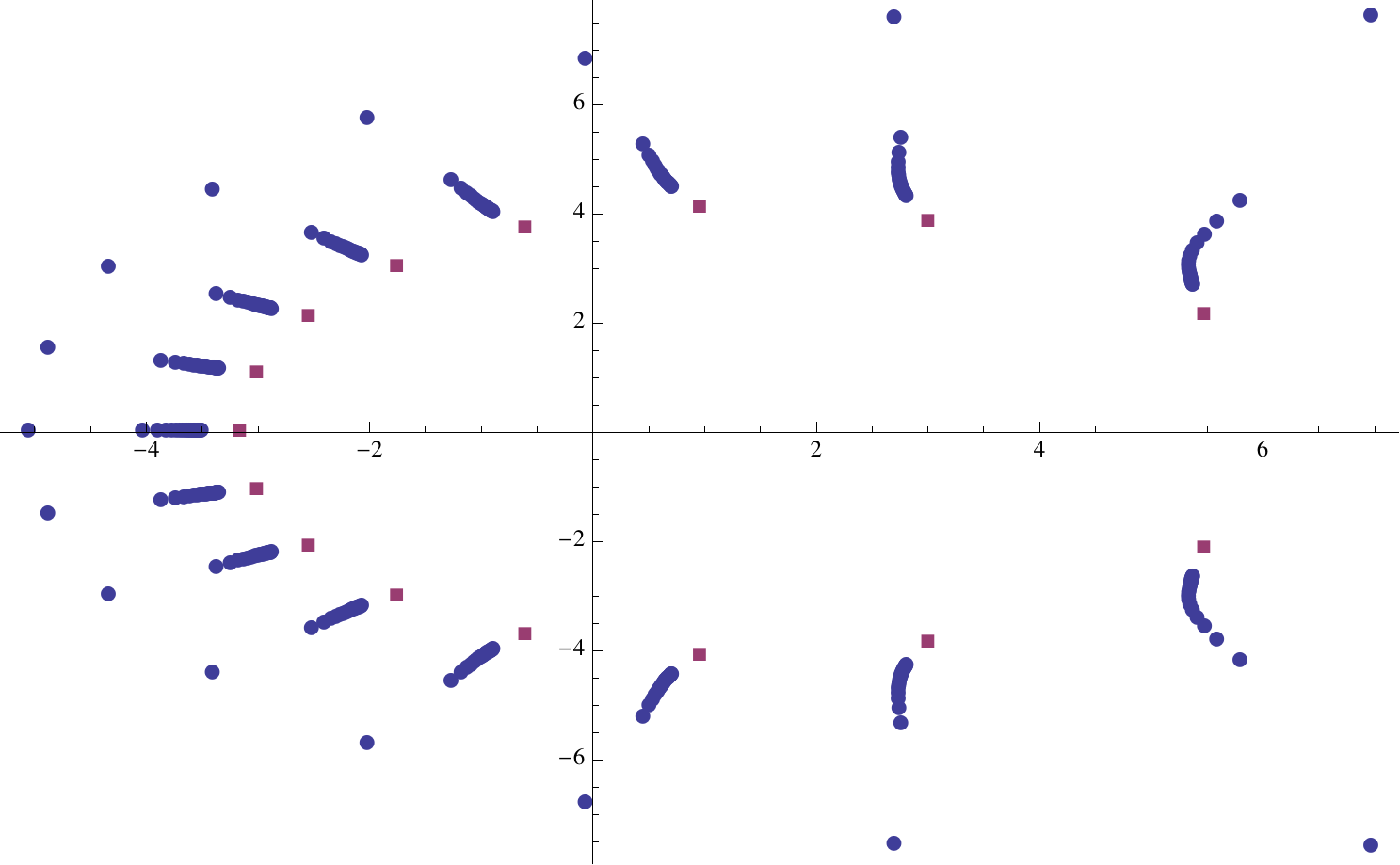}
\caption{Exceptional zeros of the polynomials $\XLagII{\al}{m}{m+j}(z)$ for
$m=15$, $\al=14.01$ and $1\leq j\leq 22$. The squares denote the zeros of
$\Lag{-\al-1}{m}(z)$ to which the zeros of $\XLagII{\al}{m}{m+j}(z)$
converge for $j\to\infty$. }
\end{figure}

\section{Exceptional Jacobi polynomials}
\subsection{Definitions and identities}
Let $m\geq 0$ be a fixed integer, and $\alpha,\beta$ real numbers.  Let
\begin{equation}
  T_{\al,\be}[y] = (1-z^2)y'' +
  (\beta-\alpha+(\alpha+\beta+2)z)y',
\end{equation}
denote the Jacobi differential operator.  The classical Jacobi
polynomial of degree $n$ can be defined as the polynomial solution
$y=\Jac{\alpha}{\beta}{n}(z)$ of the second order linear differential equation
\begin{equation}
  T_{\al,\be}[y] = -n(1+\al+\be+n) y,\quad  y(1) = \frac{(\al+1)_n}{n}.
\end{equation}
Next, define
\begin{align}
  T_{\alpha,\beta,m}[y] &= T_{\alpha,\beta}[y]+ (\alpha-\beta-m+1)m y
  \\ \nonumber &\qquad -(\log \Jac{-\al-1}{\be-1}{m})'\,
  \Big(\beta(1-z) y+(1-z^2)y'\Big),\\
  \label{eq:Aabmdef}
  A_{\alpha,\beta,m}[y] &= (1-z)\Jac{-\al}{\be}{m}\,
  y'+(m-\alpha)\Jac{-\al-1}{\be-1}{m}\, y,\\
  B_{\alpha,\beta,m}[y] &=
  \frac{(1+z)y'+(1+\beta)y}{\Jac{-\al}{\be}{m}}.
\end{align}

The following operator factorizations can be verified by the
application of elementary identities.
\begin{align}
  \label{eq:TabmAB}
  T_{\alpha,\beta,m} &= A_{\alpha+1,\beta-1,m} B_{\alpha+1,\beta-1,m}
  -(m-\alpha-1)(m+\beta) ,\\
  \label{eq:TabBA}
  T_{\alpha,\beta} &= B_{\alpha,\beta,m} A_{\alpha,\beta,m}
  -(m-\alpha)(m+\beta+1).
\end{align}
For $n\geq m$, we define the degree $n$ exceptional Jacobi polynomial
to be
\begin{align}
  \label{eq:hPdef}
  \XJac{\alpha}{\beta}{m}{n} &= \frac{(-1)^{m+1}}{\alpha+1+j}
  \,A_{\alpha+1,\beta-1,m}\left[
    \Jac{\alpha+1}{\beta-1}{j}\right],\qquad j=n-m\geq 0,\\
  &= \frac{(-1)^m}{\alpha+1+j} \left(\frac{1}{2}(1+\al+\be+j)(z-1)
    \Jac{-\al-1}{\be-1}{m}
    \Jac{\alpha+2}{\beta}{j-1}\right.\\ \nonumber
  &\qquad\qquad\qquad\left. +
    (\alpha+1-m)\,\Jac{-\al-2}{\be}{m}
    \Jac{\alpha+1}{\beta-1}{j}\right).
\end{align}
The exceptional polynomials and operator extend their classical
counterparts
\begin{align}
T_{\alpha,\beta,0}[y] &= T_{\alpha,\beta}[y],\\
\XJac{\alpha}{\beta}{0}{n} &= \Jac{\alpha}{\beta}{n}.
\end{align}

By construction, these polynomials satisfy several identities,
which we enumerate below.   The factorizations
\eqref{eq:TabmAB} \eqref{eq:TabBA} give the intertwining relations
\begin{gather}
  \label{eq:ATTA}
  A_{\alpha+1,\beta-1,m}T_{\alpha+1,\beta-1}    =
  T_{\alpha,\beta,m}A_{\alpha+1,\beta-1,m},\\
T_{\alpha+1,\beta-1}  B_{\alpha+1,\beta-1,m}  =
  B_{\alpha+1,\beta-1,m}T_{\alpha,\beta,m}.
\end{gather}
From the above relations we can derive the
eigenvalue equation for the $X_m$-Jacobi polynomials
\begin{equation}
  \label{eq:hPlambda}
  T_{\alpha,\beta,m} \left[\XJac{\alpha}{\beta}{m}{n}\right] =
  -(n-m)(1+\alpha+\beta+n-m)\hPud{\alpha,\beta}{m}{n},\quad n\geq m.\\
\end{equation}
The  factorization \eqref{eq:TabBA} implies the following identity
\begin{gather}
  \label{eq:BhP=P}
  (-1)^m(\alpha+1+j)(\beta \hPud{\alpha,\beta}{m}{m+j}+(z+1)
  \hPudp{\alpha,\beta}{m}{m+j})   =\\ \nonumber
  \qquad\qquad (\alpha+1-m+j) (\beta+m+j) \Pud{-\alpha-1,\beta-1}{m}
  \Pud{\alpha+1,\beta-1}{j}\,.
\end{gather}
It will be useful to express $\hP$ in a way that is symmetric in the dimension $j$
and the codimension $m$. Namely,
\begin{align}
  \nonumber
    & (-1)^m (\alpha+j)\hPud{\alpha-1,\beta+1}{m}{m+j} \\
    \label{eq:hPrel1}
  &\qquad = (\alpha-m)
  \Jac{-\alpha-1}{\beta+1}{m} \Jac{\alpha}{\beta}{j} + (z-1)
  \Jac{-\alpha}{\beta}{m} \Jacp{\alpha}{\beta}{j}\\
    \label{eq:hPrel2}
  &\qquad = (\alpha+j) \Pud{\alpha-1,\beta+1}{j}
  \Pud{-\alpha,\beta}{m} -
  (z-1)  \Jac{\alpha}{\beta}{j} \Jacp{-\alpha}{\beta}{m}\,.
\end{align}
The first equation is just a restatement of the definition
\eqref{eq:hPdef}, while the second identity  follows from the
classical relation
\begin{align}
  \label{eq:z-1P'}
  (z-1) \Jacp{\alpha}{\beta}{j} &=
  \alpha\Pud{\alpha,\beta}{j} -(\alpha+j) \Pud{\alpha-1,\beta+1}{j}\,.
\end{align}
At the endpoints of the interval of orthogonality we have the following classical identities
\begin{align}
  \label{eq:Pab-1}
   \Pud{\al,\be}{n}(-1) &= (-1)^n \frac{(\be+1)_n}{n!},\\
  \label{eq:Pab+1}
   \Pud{\al,\be}{n}(1) &=  \frac{(\al+1)_n}{n!},
\end{align}
which in the case of exceptional Jacobi polynomials yield the following
generalizations
\begin{align}
   \label{eq:hP+1}
   \hPud{\alpha,\beta}{m}{n}(1) &=
   \binom{\alpha+n-m}{n} \binom{n}{m} ,\quad n\geq m,\\
   & =   \frac{(\alpha+1-m)_{m+j}}{m!\,j!} ,\quad j=n- m,  \\
    \label{eq:hP-1}
    \hPud{\alpha,\beta}{m}{m+j}(-1) &=
    (-1)^j\frac{(\beta+j+m)(1+\alpha-m+j)}{(1+\alpha+j)}
    \frac{(\beta+1)_{m-1}(\beta)_{j}}{m!\,j!}.
\end{align}

Define the 1st-order operators
\begin{equation}
    \label{eq:hAabmdef}
  \hA_{\alpha,\beta,m}[y]
  =\frac{\Jac{-\al-2}{\be}{m}}{\Jac{-\al-1}{\be-1}{m}} \left(y'
  -(\log\Jac{-\al-2}{\be}{m})' \, y\right)\,,
\end{equation}
\begin{equation}
  \hB_{\alpha,\beta,m}[y]
  =(1-z^2)\frac{\Jac{-\al-1}{\be-1}{m}}{\Jac{-\al-2}{\be}{m}} \left[
    y'- \left((\log\Jac{-\al-1}{\be-1}{m})'+
      \frac{\alpha+1}{1-z}-\frac{\beta+1}{1+z}\right) y \right]\,.
\end{equation}
The following ``shape-invariant'' factorizations relate
exceptional operators of the same codimension at different values of
the parameters $\alpha,\beta$
\begin{align}
  \hB_{\alpha,\beta,m} \hA_{\alpha,\beta,m} &=T_{\alpha,\beta,m} \\
  \hA_{\alpha,\beta,m} \hB_{\alpha,\beta,m} &=T_{\alpha+1,\beta+1,m}
+2+\alpha+\beta.
\end{align}
The corresponding intertwining relations, namely,
\begin{align}
  T_{\alpha+1,\beta+1,m} \hA_{\alpha,\beta,m} &=\hA_{\alpha,\beta,m}
T_{\alpha,\beta,m}, \\
  \hB_{\alpha,\beta,m} T_{\alpha+1,\beta+1,m} &= T_{\alpha,\beta,m}
\hB_{\alpha,\beta,m},
\end{align}
give rise to the lowering and raising relations for the exceptional Jacobi
polynomials
\begin{gather}
  \label{eq:hPlower}
  \left( \frac{ \XJac{\alpha}{\beta}{m}{n}}{\Jac{-\al-2}{\be}{m}}\right)'
  = \frac{1}{2}(n-m+\alpha+\beta+1)
  \left(\frac{\Jac{-\al-1}{\be-1}{m}}{\Jac{-\al-2}{\be}{m}}\right)^2\,
  \frac{\XJac{\alpha+1}{\beta+1}{m}{n-1}}{\Jac{-\al-1}{\be-1}{m}},\quad n\geq m+1,\\
  \label{eq:hPraise}
  (1-z)^{-\alpha}(1+z)^{-\beta}\left((1-z)^{\alpha+1}
      (1+z)^{\beta+1}\frac
      {\XJac{\alpha+1}{\beta+1}{m}{n}}{\Jac{-\al-1}{\be-1}{m}}\right)' = \\ \nonumber
  \qquad \qquad -2(n-m+1)
  \left(\frac{\Jac{-\al-2}{\be}{m}}{\Jac{-\al-1}{\be-1}{m}}
  \right)^2\frac{\XJac{\alpha}{\beta}{m}{n+1}}{\Jac{-\al-2}{\be}{m}}
  ,\quad n\geq m.
\end{gather}
As usual, we denote by $f'(z)$ the derivative of $f$ with respect to
the $z$ variable.

\subsection{Orthogonality}
The exceptional Jacobi polynomials are formally orthogonal with
respect to $\hW_{\alpha,\beta,m}(z) dz,\; -1\leq z\leq 1$, where
\begin{equation}
  \label{eq:hWjacdef}
  \hW_{\alpha,\beta,m}(z) = \frac{(1-z)^\alpha
    (1+z)^\beta}{\left[\Jac{-\al-1}{\be-1}{m}(z)\right]^2}.
\end{equation}
In order to have orthogonality in the $L^2$ sense, additional conditions need to
be imposed on the parameters
$\alpha, \beta$, and $m$.  The condition $\alpha,\beta>-1$ is necessary for the
measure \eqref{eq:hWjacdef} to have
finite moments of all orders.  Another requirement is that the denominator
$\Jac{-\al-1}{\be-1}{m}$ does not vanish  for $z\in(-1,1)$, which imposes extra
conditions on $\alpha, \beta$, and $m$.

An analysis of the zeros of classical Jacobi polynomials can be found in
Szeg\H{o}'s book \cite[Chapter 6.72]{Sz}.  First, let us recall that
$\Pud{\alpha,\beta}{n}(z)$ has a zero of multiplicity $k$ at $z=1$ if
$\alpha=-k,\;
k=1,\ldots, m$, and a zero of multiplicity $j$ at $z=-1$ if $\beta=-j,\;
j=1,\ldots, m$.

We also mention the degenerate cases where
$$\deg \Pud{\alpha,\beta}{n} = k  \quad \text{ when } \quad
n+\alpha+\beta+1=-k,\qquad k=0,1,\ldots, n-1.$$
For such  parameter values the $n$th Jacobi polynomial has degree $<n$.  In these degenerate cases (where $\al+\be$
is a negative integer), we have
\begin{equation}
  \label{eq:Pdegen}
  \binom{\al+m}{m} \Pud{\al,\be}{n}(z) = \binom{\al+n}{n}
  \Pud{\al,\be}{m}(z),\quad \al+\be=-1-m-n.
\end{equation}

Since $\beta>-1$, the denominator has a zero at $z=-1$ if and only if
$\beta=0$.  However, the latter condition gives a weight with an
overall factor of $(1+z)^{-1}$, which would violate the assumption that it has
finite moments of all orders. Therefore we must impose $\beta\neq 0$. The
condition that
$\Pud{-\alpha-1,\beta-1}{m}(z)\neq0$ for $z\in(-1,1)$ is satisfied in
exactly two cases
\begin{itemize}
\item[(A)]Both $\beta$ and $\alpha+1-m$ $\in (-1,0)$.
\item[(B)]Both $\beta$ and $\alpha+1-m$ $\in (0,\infty)$.
\end{itemize}
For (A) we have $ m-2<\alpha< m-1$, while for
(B) we have $\alpha>m-1$.  Therefore, in both cases
$\Pud{-1-\alpha,\beta-1}{m}(1) \neq 0$.
According to identity \eqref{eq:Pdegen}, we also
require
\begin{equation}
  \label{eq:al1mnotin}
  \alpha+1-m-\beta\notin\{0,1,\ldots, m-1\}.
\end{equation}
If this condition is violated, then $\deg
\Pud{-\alpha-1,\beta-1}{m}(z)<m$ and, therefore, the codimension (see
below for discussion) is $\alpha+1-m-\beta$, rather than $m$.  We
therefore append condition \eqref{eq:al1mnotin} to the assumptions in
(A) and (B), to complete the following

\begin{prop}
  \label{prop:albe1}
  Suppose that $m\geq 1$.  The measure $\hW_{\alpha,\beta,m} dz,\;
  z\in(-1,1)$ is positive definite with finite moments of all orders if and only
if $\alpha,\beta,m$ satisfy one of the following  conditions
\begin{itemize}
\item[(A)] $\beta,\alpha+1-m\in (-1,0)$.
\item[(B)] $\beta,\alpha+1-m\in (0,\infty)$.
\end{itemize}
In order to ensure that  $\deg \Jac{-\alpha-1}{ \beta-1}{m} = m$  it is
  also necessary to require that $\alpha+1-m-\beta\notin\{0,1,\ldots,
  m-1\}$.
\end{prop}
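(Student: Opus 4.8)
The plan is to split the stated property into two independent requirements and dispose of them separately. Since the interval $(-1,1)$ is bounded, having \emph{finite moments of all orders} is equivalent to local integrability of $\hW_{\alpha,\beta,m}$ at its only possible singular points: the endpoints $z=\pm1$ and any real zero of the denominator $\Jac{-\al-1}{\be-1}{m}$ lying in $(-1,1)$. \emph{Positive definiteness}, on the other hand, is automatic wherever $\hW_{\alpha,\beta,m}$ is finite, since $(1-z)^\alpha(1+z)^\beta>0$ on $(-1,1)$ and the denominator enters squared; the resulting positive weight has infinite support and hence yields a genuine inner product. Thus the content collapses to showing that $\hW_{\alpha,\beta,m}\in L^1(-1,1)$ precisely in cases (A) and (B).

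First I would settle the endpoint analysis. Near $z=1$ one has $\hW_{\alpha,\beta,m}\sim(1-z)^{\alpha-2\nu_1}$, where $\nu_1$ is the order of vanishing of $\Jac{-\al-1}{\be-1}{m}$ at $z=1$; by the classical value \eqref{eq:Pab+1} this order is positive exactly when $-\al-1\in\{-1,\dots,-m\}$, i.e. $\alpha\in\{0,\dots,m-1\}$. Since $\nu_1\geq0$, integrability forces $\alpha>-1$; the analogous computation at $z=-1$ using \eqref{eq:Pab-1} forces $\beta>-1$ and moreover excludes $\beta=0$ (for which the denominator acquires a simple zero at $z=-1$ and $\hW_{\alpha,\beta,m}\sim(1+z)^{-2}$ fails to be integrable). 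Conversely, in either regime (A) or (B) one reads off from \eqref{eq:Pab+1}--\eqref{eq:Pab-1} that $\alpha,\beta>-1$, that $\alpha\notin\{0,\dots,m-1\}$ and $\beta\neq0$, whence $\Jac{-\al-1}{\be-1}{m}(\pm1)\neq0$ and the endpoint behaviour $(1-z)^\alpha,(1+z)^\beta$ is integrable.

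The heart of the proof is the remaining claim: for $\alpha,\beta>-1$, the denominator $\Jac{-\al-1}{\be-1}{m}$ has no zero in $(-1,1)$ exactly in cases (A) and (B). I would derive this from the classification of the real zeros of Jacobi polynomials with arbitrary real parameters in Szeg\H{o} \cite[\S6.72]{Sz}, applied with parameters $(-\al-1,\be-1)$. The guiding picture: anchoring at the classical region where both parameters exceed $-1$ (all $m$ zeros in $(-1,1)$), a zero can leave or re-enter the interval only by crossing an endpoint, which happens precisely at the integer parameter values detected by \eqref{eq:Pab+1}--\eqref{eq:Pab-1}; one then tracks the interior count across the cells cut out by the lines $\alpha\in\{0,\dots,m-1\}$ and $\beta=0$. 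Carrying this out, the count equals $\max(m-k,0)$ when $\beta>0$ (with $k$ counting how many of $-1,\dots,-m$ the first parameter $-\al-1$ has already passed), vanishing iff $\alpha>m-1$, which is (B); while for $\beta\in(-1,0)$ it vanishes only in the single slab $\alpha\in(m-2,m-1)$, which is (A), every other cell retaining at least one interior zero.

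The main obstacle is precisely this interior-zero bookkeeping, because the count is \emph{not} monotone in the parameters: the direction in which a zero crosses the endpoint $z=1$ reverses according to the sign of $\beta$ (equivalently, according to whether $\be-1\gtrless-1$). This is already visible for $m=1$, where $\Jac{-\al-1}{\be-1}{1}$ has its unique zero inside $(-1,1)$ iff $\alpha\beta<0$, so that decreasing $\beta$ through $0$ removes the interior zero when $\alpha<0$ but creates one when $\alpha>0$. It is this reversal that makes both the unbounded region (B) and the thin slab (A)---rather than one monotone half-plane---the exact locus where no interior zero survives, and some care is needed to confirm that each intermediate cell keeps an interior zero. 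Finally, the degree statement is immediate: by \eqref{eq:Pdegen} the degree of $\Jac{-\al-1}{\be-1}{m}$ drops below $m$ exactly when $-\al-1+\be-1+m+1\in\{0,-1,\dots,-(m-1)\}$, i.e. when $\alpha+1-m-\beta\in\{0,1,\dots,m-1\}$, the excluded set of \eqref{eq:al1mnotin}.
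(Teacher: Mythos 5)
Your proposal is correct and follows essentially the same route as the paper: reduce the statement to endpoint integrability of $(1-z)^\alpha(1+z)^\beta$ (giving $\alpha,\beta>-1$, $\beta\neq 0$) plus the requirement that $\Jac{-\al-1}{\be-1}{m}$ have no zeros in $(-1,1)$, settle the latter by the classification of real zeros of general-parameter Jacobi polynomials in Szeg\H{o} \S 6.72, and handle the degree drop via \eqref{eq:Pdegen}. If anything, your sketch is more explicit than the paper's, which simply asserts the (A)/(B) dichotomy with the Szeg\H{o} citation, whereas you spell out the interior-zero bookkeeping and correctly flag its non-monotonicity in the parameters.
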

\noindent

\subsection{Exceptional Flag}
Let us define the following codimension $m$ polynomial flag $\{ U_{\alpha,\beta,m,j} \}_{j=1}^\infty$ where
\[ U_{\alpha,\beta,m,j} = \{ f(z) \in \cP_{m+j-1}(z) :
\Jac{-\al-1}{\be-1}{m} | (1+z) f' + \beta f \}.\] At the level of
flags, the factorizations \eqref{eq:TabmAB} \eqref{eq:TabBA}
correspond to the linear isomorphisms
\[ A_{\alpha+1,\beta-1,m}: \cP_{j-1} \to U_{\alpha,\beta,m,j},\quad
B_{\alpha+1,\beta-1,m} : U_{\alpha,\beta,m,j} \to \cP_{j-1},\quad
j=1,2,\ldots. \] Thus, the exceptional polynomials
$\hPud{\alpha,\beta}{m}{m+j}$ give a basis of the flag
$\{ U_{\alpha,\beta,m,j} \}_{j=1}^\infty$

\begin{prop}
  Suppose that $\alpha,\beta,m$ satisfy either condition (A) or condition
   (B). Then $\XJac{\alpha}{\beta}{m}{m+j}(z)$ is the
  unique polynomial in $U_{\alpha,\beta,m,j}$,  orthogonal to
  $U_{\alpha,\beta,m,j-1}$ with respect to $\hW_{\alpha,\beta,m}\,dz, \;
  z\in (-1,1)$  that satisfies the normalizing condition
  \eqref{eq:hP+1}.
\end{prop}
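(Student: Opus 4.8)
The plan is to pin down $\XJac{\alpha}{\beta}{m}{m+j}$ by three independent requirements --- membership in the flag, orthogonality to the preceding subspace of the flag, and the value at $z=1$ --- and then to obtain uniqueness from a one-dimensional orthogonal complement. First I would verify \emph{flag membership}. This is not a new computation: identity \eqref{eq:BhP=P} states that $(1+z)\,\hPudp{\alpha,\beta}{m}{m+j}+\beta\,\hPud{\alpha,\beta}{m}{m+j}$ equals, up to a nonzero constant, the product $\Jac{-\al-1}{\be-1}{m}\,\Jac{\alpha+1}{\beta-1}{j}$, and is therefore divisible by $\Jac{-\al-1}{\be-1}{m}$. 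Together with the degree count this is exactly the defining condition of the flag $\{U_{\alpha,\beta,m,j}\}$, so $\XJac{\alpha}{\beta}{m}{m+j}$ belongs to it. Since the exceptional polynomials were already shown to span the flag, the lower piece of the flag is spanned by the $\XJac{\alpha}{\beta}{m}{m+i}$ of strictly smaller degree, and it suffices to test orthogonality against these.

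Next I would establish \emph{orthogonality} via the Sturm--Liouville structure, exactly as in the Laguerre case \eqref{eq:IIgreen}. The operator $T_{\alpha,\beta,m}$ is formally self-adjoint relative to the weight \eqref{eq:hWjacdef}, so Green's formula applied to the eigenvalue equation \eqref{eq:hPlambda} gives
\[
  (\lambda_{n_2}-\lambda_{n_1})\int_{-1}^{1}
  \XJac{\alpha}{\beta}{m}{n_2}\,\XJac{\alpha}{\beta}{m}{n_1}\,
  \hW_{\alpha,\beta,m}\,dz
  =\Big[(1-z^2)\,\hW_{\alpha,\beta,m}\,
  \Wr\big[\XJac{\alpha}{\beta}{m}{n_2},\XJac{\alpha}{\beta}{m}{n_1}\big]\Big]_{-1}^{1},
\]
with $\lambda_n=-(n-m)(1+\alpha+\beta+n-m)$. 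These eigenvalues are pairwise distinct for distinct degrees: a coincidence $\lambda_{n_1}=\lambda_{n_2}$ would force $(n_1-m)+(n_2-m)=-(1+\alpha+\beta)$, impossible once $\alpha,\beta>-1$ (whence $\alpha+\beta>-2$). Thus orthogonality reduces to the vanishing of the boundary term, which holds because under (A) or (B) one has $\alpha,\beta>-1$ and $\Jac{-\al-1}{\be-1}{m}$ is nonvanishing on $[-1,1]$, so $(1-z^2)\,\hW_{\alpha,\beta,m}$ vanishes at $z=\pm1$ while the Wronskian of two polynomials stays bounded. Hence $\XJac{\alpha}{\beta}{m}{m+j}$ is orthogonal to every lower-degree exceptional polynomial, i.e.\ to the preceding subspace of the flag.

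Finally I would settle \emph{uniqueness and normalization}. By Proposition~\ref{prop:albe1}, conditions (A)/(B) make $\hW_{\alpha,\beta,m}\,dz$ positive definite, so the inner product restricted to the flag is nondegenerate; the orthogonal complement of the codimension-one lower piece inside the full flag is therefore exactly one-dimensional. Since $\XJac{\alpha}{\beta}{m}{m+j}$ is a nonzero element of this complement, it spans it, and any candidate polynomial satisfying the flag and orthogonality requirements must be a scalar multiple of it. The normalization \eqref{eq:hP+1} then fixes that scalar, because the prescribed value $\hPud{\alpha,\beta}{m}{m+j}(1)=(\alpha+1-m)_{m+j}/(m!\,j!)$ is nonzero in both cases --- under (B) the Pochhammer factor $(\alpha+1-m)_{m+j}$ has all factors positive, and under (A) the quantity $\alpha+1-m\in(-1,0)$ is a non-integer so no factor can vanish. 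This yields the asserted unique polynomial.

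The step I expect to be the genuine obstacle is the vanishing of the boundary term in the displayed Green's identity, i.e.\ passing from \emph{formal} to honest $L^2$ orthogonality. This is precisely where the parameter restrictions (A)/(B) and the nonvanishing of the denominator $\Jac{-\al-1}{\be-1}{m}$ on $(-1,1)$ are indispensable, and where the argument must depart from the purely algebraic identities used elsewhere; everything else (flag membership, eigenvalue distinctness, the dimension count, and the normalization) is then routine.
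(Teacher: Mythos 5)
Your proof is correct, and it is worth noting that the paper itself states this proposition without supplying any proof, so there is no argument of the authors' to compare against; your write-up fills that gap using exactly the ingredients the paper has already assembled. Flag membership via \eqref{eq:BhP=P}, orthogonality of distinct exceptional polynomials via the Jacobi analogue of the Green's identity \eqref{eq:IIgreen} (with the eigenvalues $-(n-m)(1+\alpha+\beta+n-m)$ pairwise distinct because $\alpha,\beta>-1$, and the boundary term killed by $\alpha+1,\beta+1>0$ together with the nonvanishing of $\Jac{-\al-1}{\be-1}{m}$ at $\pm1$ under (A)/(B)), and uniqueness from the one-dimensionality of the orthogonal complement of the preceding flag subspace under the positive-definite inner product of Proposition \ref{prop:albe1} --- this is precisely the intended route, and your verification that $(\alpha+1-m)_{m+j}\neq0$ in both cases (A) and (B) correctly closes the normalization step. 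The only caveat is an indexing slip inherited from the statement itself: since $U_{\alpha,\beta,m,j}\subset\cP_{m+j-1}$ while $\deg\XJac{\alpha}{\beta}{m}{m+j}=m+j$, the proposition should read $U_{\alpha,\beta,m,j+1}$ and $U_{\alpha,\beta,m,j}$; your argument implicitly adopts this correct reading, so no repair is needed.
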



Once we have analyzed the underlying factorizations that give rise to $X_m$-Jacobi polynomials, and the conditions on the parameters that ensure their $L^2$-orthogonality, we can now turn our attention to describing some properties of their zeros.

\subsection{Zeros of exceptional Jacobi polynomials}
Let us refer to the real zeros of $\hPud{\alpha,\beta}{m}{m+j}(z)$ in
$z\in (-1,1)$ as the \emph{regular zeros}. All other zeros, whether in
$(-\infty,-1)\cup (1,\infty)$, or complex, will be said to be
\emph{exceptional zeros}.



\begin{prop}
    Suppose that $\alpha,\beta,m$ obey either condition (A) or
condition (B).  Then the regular zeros of
  $\hPud{\alpha,\beta}{m}{m+j}(z)$ are simple.
\end{prop}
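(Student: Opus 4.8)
The plan is to mimic the argument used for the Type I case and for the simplicity of \emph{all} zeros of the type II Laguerre polynomials, namely to exploit the lowering relation \eqref{eq:hPlower} together with the fact that the classical Jacobi polynomials appearing in the representation have only simple zeros. The key observation is that a multiple zero $z_0\in(-1,1)$ of $\XJac{\alpha}{\beta}{m}{m+j}$ would force $\XJac{\alpha}{\beta}{m}{m+j}(z_0)=0$ and $\XJac{\alpha}{\beta}{m}{m+j}{}'(z_0)=0$ simultaneously, and I would feed this into an identity that expresses the polynomial and its derivative in terms of objects with simple zeros, deriving a contradiction.

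\begin{proof}
Suppose, for contradiction, that $z_0\in(-1,1)$ is a multiple zero of
$\XJac{\alpha}{\beta}{m}{m+j}$, so that
$\XJac{\alpha}{\beta}{m}{m+j}(z_0)=0$ and
$\hPudp{\alpha,\beta}{m}{m+j}(z_0)=0$. Since $z_0\in(-1,1)$, under either
condition (A) or (B) the polynomial $\Jac{-\al-1}{\be-1}{m}$ does not vanish
at $z_0$ by Proposition \ref{prop:albe1}. Substituting
$\XJac{\alpha}{\beta}{m}{m+j}(z_0)=\hPudp{\alpha,\beta}{m}{m+j}(z_0)=0$ into
the identity \eqref{eq:BhP=P} yields
\[
(\alpha+1-m+j)(\beta+m+j)\,\Pud{-\alpha-1,\beta-1}{m}(z_0)\,
\Pud{\alpha+1,\beta-1}{j}(z_0)=0.
\]
Under (A) or (B) the prefactors $\alpha+1-m+j$ and $\beta+m+j$ are nonzero,
and $\Pud{-\alpha-1,\beta-1}{m}(z_0)\neq 0$, so we conclude that
$\Pud{\alpha+1,\beta-1}{j}(z_0)=0$. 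Thus $z_0$ is a zero of the classical
Jacobi polynomial $\Pud{\alpha+1,\beta-1}{j}$.
\end{proof}

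It remains to close the loop by showing $z_0$ cannot simultaneously be a zero of $\XJac{\alpha}{\beta}{m}{m+j}$ and of $\Pud{\alpha+1,\beta-1}{j}$; here I would use the second expression in the definition \eqref{eq:hPdef}, which writes $\XJac{\alpha}{\beta}{m}{m+j}$ as a linear combination of $\Jac{-\al-1}{\be-1}{m}\Jac{\alpha+2}{\beta}{j-1}$ and $\Jac{-\al-2}{\be}{m}\Jac{\alpha+1}{\beta-1}{j}$. Evaluating at $z_0$ kills the second term, forcing $(z_0-1)\Jac{-\al-1}{\be-1}{m}(z_0)\Jac{\alpha+2}{\beta}{j-1}(z_0)=0$; since $z_0\neq 1$ and $\Jac{-\al-1}{\be-1}{m}(z_0)\neq 0$, this gives $\Jac{\alpha+2}{\beta}{j-1}(z_0)=0$. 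But then $z_0$ would be a common zero of the two consecutive-parameter classical Jacobi polynomials $\Jac{\alpha+1}{\beta-1}{j}$ and $\Jac{\alpha+2}{\beta}{j-1}$, and the standard interlacing/coprimality of classical Jacobi polynomials (they share no zeros, a consequence of their three-term recurrence and Sturm–Liouville structure) produces the contradiction.

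The main obstacle I anticipate is the final coprimality step: I must be certain that $\Jac{\alpha+1}{\beta-1}{j}$ and $\Jac{\alpha+2}{\beta}{j-1}$ have no common zeros in $(-1,1)$. This is not literally the interlacing of $\Lag{\al}{j}$ and $\Lag{\al}{j-1}$ used in the type I proof, since here both the degree \emph{and} the parameters shift; the cleanest route is to invoke a classical contiguous relation expressing $\Jac{\alpha+2}{\beta}{j-1}$ through $\Jac{\alpha+1}{\beta-1}{j}$ and its derivative (analogous to \eqref{eq:z-1P'}), so that a common zero would force a double zero of $\Jac{\alpha+1}{\beta-1}{j}$, contradicting simplicity of classical Jacobi zeros. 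One must also verify that the relevant classical parameters $(\alpha+1,\beta-1)$ and $(\alpha+2,\beta)$ lie in the admissible range under (A) and (B) so that the zeros are genuinely real and simple. Given those checks, the argument is complete.
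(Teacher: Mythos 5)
Your argument is exactly the one the paper intends: its proof is the single line ``this follows from \eqref{eq:hPdef}, \eqref{eq:BhP=P} and the simplicity of the zeros of classical Jacobi polynomials,'' and your two-step deduction (first $\Jac{\al+1}{\be-1}{j}(z_0)=0$ from \eqref{eq:BhP=P}, then $\Jac{\al+2}{\be}{j-1}(z_0)=0$ from the expanded form of \eqref{eq:hPdef}) is the correct unpacking of it. The coprimality step you flag does close precisely as you suggest: the classical derivative formula gives $\Jacp{\al+1}{\be-1}{j}=\frac{1}{2}(j+\al+\be+1)\,\Jac{\al+2}{\be}{j-1}$, and $j+\al+\be+1>0$ under (A) or (B), so a common zero at $z_0$ would be a double zero of $\Jac{\al+1}{\be-1}{j}$. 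The one caveat is the point you half-anticipate: in case (A) one has $\be-1<-1$, outside the classical parameter range, so ``simplicity of classical Jacobi zeros'' should there be justified not by orthogonality but by observing that $\Jac{\al+1}{\be-1}{j}$ is a nontrivial solution of the Jacobi differential equation whose leading coefficient $1-z^2$ does not vanish at $z_0\in(-1,1)$, whence a double zero would force the solution to vanish identically.
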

\begin{proof}
  This follows from \eqref{eq:hPdef} \eqref{eq:BhP=P} and the
  simplicity of the zeros of classical Jacobi polynomials.
\end{proof}

\begin{prop}
  Suppose that $\alpha,\beta,m$ obey either condition (A) or
condition (B).  Then $\hPud{\alpha,\beta}{m}{m+j}$ has exactly $j$
  regular zeros and $m$ exceptional zeros.
\end{prop}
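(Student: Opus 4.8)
The plan is to pin down the number of regular zeros first and then read off the exceptional count from the degree. Since $\deg \XJac{\alpha}{\beta}{m}{m+j} = m+j$ and the endpoint values \eqref{eq:hP+1} and \eqref{eq:hP-1} are nonzero under conditions (A) and (B), once we know that exactly $j$ of the $m+j$ zeros (counted with multiplicity) lie in the open interval $(-1,1)$ — and these are simple by the preceding proposition — the remaining $m$ zeros must be exceptional. Thus everything reduces to showing that $\XJac{\alpha}{\beta}{m}{m+j}$ has \emph{exactly} $j$ regular zeros. Before starting, I record the enabling observation that both conditions (A) and (B) are stable under the shift $(\alpha,\beta)\mapsto(\alpha+1,\beta+1)$ (indeed (A) is carried into (B), while (B) is preserved), and that the non-degeneracy condition $\alpha+1-m-\beta\notin\{0,\dots,m-1\}$ is invariant under this shift. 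Consequently Proposition~\ref{prop:albe1}, applied both at $(\alpha,\beta)$ and at $(\alpha+1,\beta+1)$, guarantees that the two denominator polynomials $\Jac{-\al-1}{\be-1}{m}$ and $\Jac{-\al-2}{\be}{m}=\Jac{-(\alpha+1)-1}{(\beta+1)-1}{m}$ have no zeros in $(-1,1)$; this is what makes the logarithmic-derivative identities usable on the whole interval.

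For the lower bound I would show by induction on $j$, carrying the statement for all admissible parameters simultaneously, that $\XJac{\alpha}{\beta}{m}{m+j}$ has at least $j$ regular zeros. The case $j=0$ is trivial. For the inductive step, apply the raising relation \eqref{eq:hPraise} at parameters $(\alpha,\beta)$: setting
\[ F(z) = (1-z)^{\alpha+1}(1+z)^{\beta+1}\,\frac{\XJac{\alpha+1}{\beta+1}{m}{m+j}}{\Jac{-\al-1}{\be-1}{m}}, \]
the factor $(1-z)^{\alpha+1}(1+z)^{\beta+1}$ is positive on $(-1,1)$ and vanishes at both endpoints (as $\alpha,\beta>-1$ under (A) and (B)), while the denominator is zero-free; hence $F$ extends continuously to $[-1,1]$ with $F(\pm1)=0$ and vanishes in the interior exactly at the regular zeros of $\XJac{\alpha+1}{\beta+1}{m}{m+j}$. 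By the induction hypothesis (valid at $(\alpha+1,\beta+1)$) there are at least $j$ such interior zeros $\zeta_1<\cdots<\zeta_k$, $k\geq j$; together with the two endpoints this produces $k+1$ consecutive subintervals, and Rolle's theorem yields at least $k+1$ zeros of $F'$ in $(-1,1)$. Since \eqref{eq:hPraise} expresses $F'$ as $\XJac{\alpha}{\beta}{m}{m+j+1}$ times factors that are strictly nonzero on $(-1,1)$, these are exactly regular zeros of $\XJac{\alpha}{\beta}{m}{m+j+1}$, giving at least $j+1$ of them.

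For the upper bound I would again induct on $j$, now using the lowering relation \eqref{eq:hPlower}. The base case $j=0$ follows because \eqref{eq:hPdef} collapses (the $\Jac{\alpha+2}{\beta}{j-1}$ term disappears at $j=0$) to give $\XJac{\alpha}{\beta}{m}{m}\propto\Jac{-\al-2}{\be}{m}$, which is zero-free on $(-1,1)$ by the observation above, so there are no regular zeros. For the step, write $G(z)=\XJac{\alpha}{\beta}{m}{m+j}/\Jac{-\al-2}{\be}{m}$; since the denominator is zero-free, $G$ vanishes exactly at the regular zeros of $\XJac{\alpha}{\beta}{m}{m+j}$. By \eqref{eq:hPlower}, $G'$ equals $\XJac{\alpha+1}{\beta+1}{m}{m+j-1}$ times factors nonvanishing on $(-1,1)$, so between consecutive regular zeros of $\XJac{\alpha}{\beta}{m}{m+j}$ Rolle's theorem forces a regular zero of $\XJac{\alpha+1}{\beta+1}{m}{m+j-1}$. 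If $\XJac{\alpha}{\beta}{m}{m+j}$ had $r$ regular zeros it would force at least $r-1$ regular zeros of the latter, which by the induction hypothesis is at most $j-1$; hence $r\le j$. Combining the two bounds gives exactly $j$ regular zeros, and the degree count completes the proof.

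I expect the main obstacle to be the bookkeeping that keeps the two logarithmic-derivative identities \eqref{eq:hPlower} and \eqref{eq:hPraise} valid across the \emph{entire} open interval: one must verify that every auxiliary factor — the powers $(1\mp z)^{\bullet}$, the squared ratios of Jacobi polynomials, and the denominators — is sign-definite and nonvanishing on $(-1,1)$, which is precisely where the stability of conditions (A)/(B) under the parameter shift, and the resulting zero-freeness of both $\Jac{-\al-1}{\be-1}{m}$ and $\Jac{-\al-2}{\be}{m}$, does the real work. The boundary behaviour of $F$ at $z=\pm1$ (the source of the two ``extra'' applications of Rolle that upgrade $k$ interior zeros to $k+1$) also deserves care, since it is what correctly propagates the count $j\mapsto j+1$.
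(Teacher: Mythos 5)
Your proposal is correct and follows essentially the same route as the paper: an induction on $j$ for the lower bound via the raising relation \eqref{eq:hPraise} and Rolle's theorem (with the two extra endpoint zeros coming from the vanishing of the weight factor at $z=\pm1$), an induction for the upper bound via the lowering relation \eqref{eq:hPlower} starting from $\XJac{\alpha}{\beta}{m}{m}\propto\Jac{-\al-2}{\be}{m}$, and the degree count for the $m$ exceptional zeros. The only difference is that you spell out more explicitly the stability of conditions (A)/(B) under $(\alpha,\beta)\mapsto(\alpha+1,\beta+1)$ and the boundary behaviour at $\pm1$, details the paper states more tersely.
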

\begin{proof}
  We begin by showing that $\XJac{\alpha}{\beta}{m}{m+j}$ has at least $j$
  regular zeros  by induction on $j$.  The case $j=0$ is
  trivial.  Suppose now that the proposition has been established for
  $j\geq 0$.  Note that $\alpha+1,\beta+1,m$ always belong to class B by hypothesis. We observe also \cite[Chapter 6.72]{Sz} that $\Jac{-\al-1}{\be-1}{m}(z)$
  and $\Jac{-\al-2}{\be}{m}(z)$ have no zeros in $z\in [-1,1]$.
  Let $\zeta_1,\ldots, \zeta_i,\; i\geq j$,
  be the regular zeros of $\XJac{\alpha+1}{\beta+1}{m}{m+j}$.
  By \eqref{eq:hPraise} and  Rolle's theorem,
  $\XJac{\alpha}{\beta}{m}{m+j+1}(z)$ has at least one zero in each of the intervals
  $(\zeta_1,\zeta_2), (\zeta_2,\zeta_3),\ldots,
  (\zeta_{i-1},\zeta_i)$.   There will also be a zero in
  $(-1,\zeta_1)$ and a zero in $(\zeta_i,1)$, for a total of at least
  $i+1$ zeros.

  We conclude by showing that $\XJac{\alpha}{\beta}{m}{m+j}$ has at most
  $j$ regular zeros.  The proof is again by induction on $j$.  Observe
  that
  \[ \XJac{\alpha}{\beta}{m}{m} = (-1)^m \left(\frac{\alpha+1-m}{\alpha+1+j}\right)
  \Jac{-\al-2}{\be}{m},\] and the latter has no zeros in
  $[-1,1]$. Relation \eqref{eq:hPlower} shows that between two regular
  zeros of $\XJac{\alpha}{\beta}{m}{m+j}$ there is at least one zero of
  $\XJac{\alpha+1}{\beta+1}{m}{m+j-1}$.  Hence, if we assume that the
  latter has at most $j-1$ regular zeros, then the former has at most
  $j$ regular zeros.
\end{proof}

\subsection{Asymptotic behaviour of the zeros}
Our next goal is to derive a representation for the $X_m$-Jacobi polynomials that
is amenable  to asymptotic analysis.
\begin{prop}
  The following identity holds:
  \begin{align}
    \label{eq:xjacaltrep}
    &(-1)^m \XJac{\alpha}{\beta}{m}{m+j} = \\ \nonumber &\qquad
    \frac{\alpha-\beta-m+1}{1+\alpha+j}
    \Pud{-\alpha,\beta}{m-1}\left(\frac{j}{\alpha+\beta+2j}
      \Pud{\alpha,\beta}{j}-\frac{\alpha+j}{\alpha+\beta+2j}
      \Pud{\alpha,\beta}{j-1} \right) + \\ \nonumber &\qquad
    \left(\frac{1+\alpha-m}{1+\alpha+j} \Pud{-2-\alpha,\beta}{m} +
      \frac{j}{1+\alpha+j} \Pud{-\alpha-1,\beta-1}{m} \right)
    \Pud{\alpha,\beta}{j}\,.
  \end{align}
\end{prop}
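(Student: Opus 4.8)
The plan is to start from the symmetric representation \eqref{eq:hPrel2} and massage its right-hand side so that the only degree $j$ classical factors that survive are $\Pud{\alpha,\beta}{j}$ and $\Pud{\alpha,\beta}{j-1}$, i.e. Jacobi polynomials with the \emph{same} parameters $\alpha,\beta$ to which the classical Mehler--Heine formula applies directly. Substituting $\alpha\mapsto\alpha+1,\ \beta\mapsto\beta-1$ into \eqref{eq:hPrel2} (equivalently, starting from the second line of \eqref{eq:hPdef}) yields
\[
(-1)^m(\alpha+1+j)\,\XJac{\alpha}{\beta}{m}{m+j}
= (\alpha+1+j)\Pud{\alpha,\beta}{j}\Pud{-\alpha-1,\beta-1}{m}
-(z-1)\Pud{\alpha+1,\beta-1}{j}\,\Jacp{-\alpha-1}{\beta-1}{m}.
\]
Two conversions are then required on the right: one turning the Jacobi factor $\Pud{\alpha+1,\beta-1}{j}$ into $\Pud{\alpha,\beta}{j}$ and $\Pud{\alpha,\beta}{j-1}$, and one eliminating the derivative $\Jacp{-\alpha-1}{\beta-1}{m}$ in favour of the fixed (degree $m-1$) factor $\Pud{-\alpha,\beta}{m-1}$. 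The whole argument will rest on three classical Jacobi identities: the differentiation formula $\Jacp{a}{b}{m}=\tfrac12(m+a+b+1)\Jac{a+1}{b+1}{m-1}$, the contiguity $\Pud{\alpha+1,\beta-1}{j}=\Pud{\alpha,\beta}{j}+\Pud{\alpha+1,\beta}{j-1}$, and \eqref{eq:z-1P'}.

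Carrying this out in order, first I would apply the differentiation formula with $(a,b)=(-\alpha-1,\beta-1)$ to get $\Jacp{-\alpha-1}{\beta-1}{m}=\tfrac12(m-\alpha+\beta-1)\Pud{-\alpha,\beta}{m-1}$; note that $m-\alpha+\beta-1=-(\alpha-\beta-m+1)$ is exactly the numerator in the first line of \eqref{eq:xjacaltrep}. Next I would use the contiguity to write $\Pud{\alpha+1,\beta-1}{j}=\Pud{\alpha,\beta}{j}+\Pud{\alpha+1,\beta}{j-1}$, splitting the $(z-1)$–term into a piece carrying $(z-1)\Pud{\alpha,\beta}{j}$ and a piece carrying $(z-1)\Pud{\alpha+1,\beta}{j-1}$. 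For the latter I would invoke the classical relation $(z-1)\Pud{\alpha+1,\beta}{j-1}=\frac{2}{\alpha+\beta+2j}\bigl(j\Pud{\alpha,\beta}{j}-(\alpha+j)\Pud{\alpha,\beta}{j-1}\bigr)$; after dividing through by $(\alpha+1+j)$ this term reproduces verbatim the first summand of \eqref{eq:xjacaltrep}, namely $\frac{\alpha-\beta-m+1}{1+\alpha+j}\Pud{-\alpha,\beta}{m-1}$ times the bracket $\frac{j}{\alpha+\beta+2j}\Pud{\alpha,\beta}{j}-\frac{\alpha+j}{\alpha+\beta+2j}\Pud{\alpha,\beta}{j-1}$.

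Everything left over is proportional to $\Pud{\alpha,\beta}{j}$, with coefficient $(\alpha+1+j)\Pud{-\alpha-1,\beta-1}{m}-\tfrac12(m-\alpha+\beta-1)(z-1)\Pud{-\alpha,\beta}{m-1}$. To match the second line of \eqref{eq:xjacaltrep} I would peel off the term $j\Pud{-\alpha-1,\beta-1}{m}\Pud{\alpha,\beta}{j}$ and reduce the residue to the purely $m$-dependent identity
\[
(\alpha+1)\Pud{-\alpha-1,\beta-1}{m}-\tfrac12(m-\alpha+\beta-1)(z-1)\Pud{-\alpha,\beta}{m-1}
=(\alpha+1-m)\Pud{-2-\alpha,\beta}{m}.
\]
This is precisely \eqref{eq:z-1P'} applied to $\Pud{-\alpha-1,\beta-1}{m}$ (using the differentiation formula once more to recognise $\tfrac12(m-\alpha+\beta-1)\Pud{-\alpha,\beta}{m-1}$ as $\Jacp{-\alpha-1}{\beta-1}{m}$), so it closes the argument and produces the factor $\frac{1+\alpha-m}{1+\alpha+j}\Pud{-2-\alpha,\beta}{m}$ of the second summand. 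I expect the main obstacle to be neither conceptual nor any single identity, but the sign and coefficient bookkeeping: the two conversions each contribute $\Pud{\alpha,\beta}{j}$–terms that must combine correctly, and one must check that the residual $m$-factor identity carries exactly the coefficients dictated by \eqref{eq:z-1P'}. A convenient independent sanity check is to verify the whole identity for $m=1$, where every degree $m$ and degree $m-1$ polynomial is an explicit linear function of $z$ and the collapse of the residue can be confirmed by hand.
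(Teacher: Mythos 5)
Your proof is correct and is essentially the paper's own argument written out in full: both start from \eqref{eq:hPrel2} shifted to the parameters $(\alpha+1,\beta-1)$ and apply \eqref{eq:z-1P'} together with the contiguity relations $\Pud{\alpha+1,\beta-1}{j}=\Pud{\alpha,\beta}{j}+\Pud{\alpha+1,\beta}{j-1}$ and $(\alpha+\beta+2j)(z-1)\Pud{\alpha+1,\beta}{j-1}=2j\Pud{\alpha,\beta}{j}-2(\alpha+j)\Pud{\alpha,\beta}{j-1}$. Your residual $m$-identity is the correct one, but note that it actually requires \eqref{eq:z-1P'} with the opposite overall sign on its right-hand side (i.e.\ $(1-z)\Jacp{\alpha}{\beta}{j}=\alpha\Pud{\alpha,\beta}{j}-(\alpha+j)\Pud{\alpha-1,\beta+1}{j}$), which is the version consistent with the normalization \eqref{eq:Pab+1} and with the mutual consistency of \eqref{eq:hPrel1} and \eqref{eq:hPrel2}.
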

\begin{proof}
  We begin with \eqref{eq:hPrel2} and apply \eqref{eq:z-1P'} as well
  as the following classical identities:
  \begin{align*}
    \Pud{1+\alpha,\beta-1}{j} &= \Pud{\alpha+1,\beta}{j-1} +
    \Pud{\alpha,\beta}{j}\\
    (\alpha+\beta+2j)(z-1) \Pud{\alpha+1,\beta}{j-1} &= 2j
    \Pud{\alpha,\beta}{j} - 2(\alpha+j) \Pud{\alpha,\beta}{j-1}
  \end{align*}
\end{proof}

\begin{prop}
  Suppose that $\alpha,\beta>-1$. As $j\to \infty$ we have the
  following asymptotic behaviour for $z$ in compact sets of $\mathbb
C\backslash[-1,1]$
  \begin{equation}
    \label{eq:hPasym}
    \hPud{\alpha,\beta}{m}{m+j} -
    (-1)^m\Jac{-\al-1}{\be-1}{m} \Jac{\al}{\be}{j}\rightrightarrows 0,\qquad j\to\infty.
  \end{equation}
\end{prop}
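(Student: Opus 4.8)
The plan is to exploit the symmetric representation \eqref{eq:xjacaltrep}, which already displays the two classical factors appearing on the right of \eqref{eq:hPasym}, and to form the difference directly. Set $D_j(z):=\XJac{\al}{\be}{m}{m+j}-(-1)^m\Jac{-\al-1}{\be-1}{m}\Jac{\al}{\be}{j}$. Substituting \eqref{eq:xjacaltrep} and collecting terms, the coefficient of $\Jac{\al}{\be}{j}$ telescopes and leaves a common factor $(1+\al+j)^{-1}$ out front; the remaining contribution from the first line of \eqref{eq:xjacaltrep} is simplified using the classical identity $(\al+\be+2j)(z-1)\Jac{\al+1}{\be}{j-1}=2j\Jac{\al}{\be}{j}-2(\al+j)\Jac{\al}{\be}{j-1}$ already invoked above. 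The net effect is a clean factorization $D_j=(1+\al+j)^{-1}R_j$, where $R_j$ is a linear combination, with $j$-independent scalar coefficients and bounded-degree polynomial coefficients, of the products $\Jac{-\al}{\be}{m-1}\Jac{\al+1}{\be}{j-1}$, $\Jac{-2-\al}{\be}{m}\Jac{\al}{\be}{j}$ and $\Jac{-\al-1}{\be-1}{m}\Jac{\al}{\be}{j}$.

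With this reduction, the claim \eqref{eq:hPasym} becomes equivalent to $R_j(z)=o(j)$, uniformly on compact subsets of $\Cset\setminus[-1,1]$. To examine $R_j$ I would use the classical outer ratio asymptotics $\Jac{\al}{\be}{j-1}(z)/\Jac{\al}{\be}{j}(z)\rightrightarrows 1/\phi(z)$, where $\phi(z)=z+\sqrt{z^2-1}$ with $|\phi(z)|>1$ off $[-1,1]$, together with the parameter-shift relative asymptotics that place $\Jac{\al+1}{\be}{j-1}$ on the same Szeg\H{o} scale as $\Jac{\al}{\be}{j}$. Factoring out the common growth $\Jac{\al}{\be}{j}(z)$, the leading part of $R_j$ is governed by a pointwise combination of the limiting profiles, so the whole matter reduces to whether the exponentially growing part of $R_j$ cancels.

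The main obstacle is precisely this $\phi^j$ growth, and I expect it to be decisive. The prefactor $(1+\al+j)^{-1}$ supplies only one power of $j^{-1}$, so cancellation of the leading Szeg\H{o} profile alone would not suffice: any residual term of order $\phi(z)^j j^{-c}$ surviving in $R_j$ dominates $(1+\al+j)^{-1}$ and forces $D_j$ to diverge rather than vanish. A direct check of the smallest nontrivial case is instructive: for $m=1$ the contiguity relation $\Jac{\al+1}{\be-1}{j}=\Jac{\al+1}{\be}{j-1}+\Jac{\al}{\be}{j}$ collapses the bracket to a single term and gives $D_j=-\frac{1}{2}(\al-\be)(z-1)(1+\al+j)^{-1}\Jac{\al+1}{\be-1}{j}$, a polynomial of full degree $j+1$ over $j$; since $\Jac{\al+1}{\be-1}{j}$ grows like $\phi(z)^j$, we obtain $D_j\sim \phi(z)^j/j$, which does not tend to $0$ unless $\al=\be$. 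Thus the exponential part of $R_j$ does not cancel in general, and the statement that genuinely holds — and is what the sequel actually uses — is the relative (outer ratio) asymptotic $\XJac{\al}{\be}{m}{m+j}/\Jac{\al}{\be}{j}\rightrightarrows(-1)^m\Jac{-\al-1}{\be-1}{m}$, which follows immediately from $D_j=(1+\al+j)^{-1}R_j$ together with the ratio asymptotics and suffices, via Hurwitz's theorem, to locate the $m$ exceptional zeros of $\XJac{\al}{\be}{m}{m+j}$ near the zeros of $\Jac{-\al-1}{\be-1}{m}$.
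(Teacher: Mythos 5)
Your substitution of the symmetric representation \eqref{eq:xjacaltrep} followed by the outer ratio asymptotics $\Pud{\alpha,\beta}{j-1}/\Pud{\alpha,\beta}{j}\rightrightarrows 1/\phi$, $\phi(z)=z+\sqrt{z^2-1}$, is exactly the paper's argument --- its entire proof consists of citing that ratio formula and asserting that the conclusion ``follows directly'' from \eqref{eq:xjacaltrep}. But you have carried the bookkeeping one step further than the paper, and your conclusion is the correct one: what that argument actually delivers is the \emph{relative} asymptotics
\begin{equation*}
\frac{\XJac{\alpha}{\beta}{m}{m+j}}{\Pud{\alpha,\beta}{j}}\rightrightarrows(-1)^m\Pud{-\alpha-1,\beta-1}{m},\qquad j\to\infty,
\end{equation*}
since after dividing \eqref{eq:xjacaltrep} by $\Pud{\alpha,\beta}{j}$ every term that must vanish carries a coefficient $O(j^{-1})$ against a ratio that stays bounded on compact subsets of $\Cset\setminus[-1,1]$. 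The difference form actually displayed in \eqref{eq:hPasym} is strictly stronger and, as you show, false in general. Your factorization $D_j=(1+\alpha+j)^{-1}R_j$ is right; explicitly,
\begin{equation*}
(-1)^m R_j=(\alpha-\beta-m+1)\,\frac{z-1}{2}\,\Pud{-\alpha,\beta}{m-1}\Pud{\alpha+1,\beta}{j-1}
+\Big((1+\alpha-m)\Pud{-\alpha-2,\beta}{m}-(1+\alpha)\Pud{-\alpha-1,\beta-1}{m}\Big)\Pud{\alpha,\beta}{j},
\end{equation*}
which grows like $\phi(z)^j$ up to algebraic factors off $[-1,1]$, so $D_j$ diverges like $\phi(z)^j/j$ unless the Szeg\H{o}-scale part cancels identically --- and it cannot, because the cross term carries an irrational factor through $1-\phi^{-1}$ unless $\alpha-\beta-m+1=0$. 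Your closed-form check for $m=1$, namely $D_j=-\tfrac12(\alpha-\beta)(z-1)(1+\alpha+j)^{-1}\Pud{\alpha+1,\beta-1}{j}$ via the contiguity $\Pud{\alpha+1,\beta-1}{j}=\Pud{\alpha+1,\beta}{j-1}+\Pud{\alpha,\beta}{j}$ (the same identity the paper uses to derive \eqref{eq:xjacaltrep}), is correct and settles the matter.

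So the proposition as printed is a misstatement rather than a gap in your argument: the intended content is the ratio form, consistent with the two Laguerre analogues in the paper (both of which are stated as ratios, e.g.\ $\XLagI{\al}{m}{m+j}/\Lag{\al}{j}\rightrightarrows\xi_{\al-1,m}$) and with the abstract's announced ``outer relative asymptotics''. The ratio form is also all that the subsequent corollary needs: since $\Pud{\alpha,\beta}{j}$ has no zeros off $(-1,1)$, Hurwitz's theorem applied to $\XJac{\alpha}{\beta}{m}{m+j}/\Pud{\alpha,\beta}{j}$ sends the $m$ exceptional zeros to the zeros of $\Pud{-\alpha-1,\beta-1}{m}$, exactly as you say. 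Your proposal proves the corrected statement by the paper's own method and correctly identifies why the stated one cannot hold; the only cosmetic remark is that the corollary's claim about the \emph{regular} zeros is likewise not a consequence of the outer asymptotics alone (it requires the Heine--Mehler formula \eqref{eq:hmxjac}), but that lies outside the statement you were asked to prove.
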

\begin{proof}
  We make use of the following well known ratio asymptotics formula
  for classical Jacobi polynomials:
  \begin{equation}
    \frac{\Pud{\alpha,\beta}{j-1}(z)}{\Pud{\alpha,\beta}{j}(z)}
    \rightrightarrows \frac{1}{z+\sqrt{z^2-1}},\quad z\notin [-1,1].
  \end{equation}
  The conclusion now follows directly from \eqref{eq:xjacaltrep}.
\end{proof}

As a straightforward consequence, the following corollary describes the
asymptotic behaviour of the
zeros of exceptional Jacobi polynomials.
\begin{cor}
 The $j$ regular zeros of $ \hPud{\alpha,\beta}{m}{m+j}$ approach the zeros of
the classical Jacobi polynomial  $\Pud{\alpha,\beta}{j}$ as $j\to\infty$, while
the $m$ exceptional zeros of $\hPud{\alpha,\beta}{m}{m+j}$ approach the zeros of
$\Pud{-\alpha-1,\beta-1}{m}$.
\end{cor}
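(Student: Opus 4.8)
The plan is to treat the two families of zeros separately, relying on the outer asymptotic relation \eqref{eq:hPasym} for the exceptional zeros and on the explicit representation \eqref{eq:xjacaltrep} for the regular ones.

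For the $m$ exceptional zeros the argument is short. Since the zeros of $\Jac{\al}{\be}{j}$ all lie in $(-1,1)$, on any compact set $K\subset\Cset\setminus[-1,1]$ the polynomial $\Jac{\al}{\be}{j}$ is zero-free and, by the classical exterior asymptotics, $|\Jac{\al}{\be}{j}|\to\infty$ uniformly on $K$. Dividing \eqref{eq:hPasym} by $\Jac{\al}{\be}{j}$ therefore yields
\[
\frac{\hPud{\alpha,\beta}{m}{m+j}}{\Jac{\al}{\be}{j}}\rightrightarrows (-1)^m\Jac{-\al-1}{\be-1}{m}
\]
on compact subsets of $\Cset\setminus[-1,1]$, the uniform bound on the numerator coming from \eqref{eq:hPasym} and the growth of the denominator sending the quotient of the error term to zero. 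On $\Cset\setminus[-1,1]$ the zeros of the left-hand side coincide with the exceptional zeros of $\hPud{\alpha,\beta}{m}{m+j}$, while the limit $\Jac{-\al-1}{\be-1}{m}=\Pud{-\alpha-1,\beta-1}{m}$ is a fixed polynomial of degree $m$ whose zeros, under conditions (A) or (B), lie off $[-1,1]$. Hurwitz's theorem then places, for $j$ large, one exceptional zero near each zero of $\Pud{-\alpha-1,\beta-1}{m}$; since there are exactly $m$ exceptional zeros by the counting Proposition, a matching by multiplicity shows that all of them converge to the zeros of $\Pud{-\alpha-1,\beta-1}{m}$. A routine a~priori bound on the modulus of the exceptional zeros, via Hurwitz applied on a large disk, rules out escape to infinity.

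For the regular zeros the exterior relation \eqref{eq:hPasym} is of no help, since it controls nothing inside $(-1,1)$; instead I would work from the representation \eqref{eq:xjacaltrep}. As $j\to\infty$ the scalar coefficients there satisfy $\tfrac{\alpha-\beta-m+1}{1+\alpha+j}\to0$, $\tfrac{1+\alpha-m}{1+\alpha+j}\to0$, $\tfrac{j}{1+\alpha+j}\to1$, and $\tfrac{j}{\alpha+\beta+2j},\ \tfrac{\alpha+j}{\alpha+\beta+2j}\to\tfrac12$. Because the classical polynomials $\Pud{\alpha,\beta}{j}$ and $\Pud{\alpha,\beta}{j-1}$ remain uniformly bounded on compact subsets of $(-1,1)$, the terms carrying a vanishing coefficient drop out, and \eqref{eq:xjacaltrep} exhibits $(-1)^m\hPud{\alpha,\beta}{m}{m+j}$ as the fixed, interval-nonvanishing factor $\Pud{-\alpha-1,\beta-1}{m}$ times $\Pud{\alpha,\beta}{j}$, up to a perturbation small relative to $\Pud{\alpha,\beta}{j}$. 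Since $\Pud{-\alpha-1,\beta-1}{m}$ keeps a constant sign on $[-1,1]$ under (A) or (B), the sign changes of $\hPud{\alpha,\beta}{m}{m+j}$ in the interior are asymptotically dictated by those of $\Pud{\alpha,\beta}{j}$; combined with the fact that $\hPud{\alpha,\beta}{m}{m+j}$ has exactly $j$ regular zeros, this forces each regular zero to lie within a distance tending to zero of the corresponding zero of $\Pud{\alpha,\beta}{j}$.

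I expect the regular-zero statement to be the genuine obstacle. The exceptional zeros fall out of a clean Hurwitz argument because their limit is a single fixed polynomial, whereas for the regular zeros one compares two families each of which has $j\to\infty$ zeros densely filling $(-1,1)$, so Hurwitz does not apply directly. Making the step ``the interior sign changes are dictated by $\Pud{\alpha,\beta}{j}$'' fully rigorous requires controlling the correction term uniformly against the local spacing of the zeros of $\Pud{\alpha,\beta}{j}$ --- most cleanly through a Mehler--Heine scaling at the endpoints together with an interior relative-asymptotics estimate --- rather than through the bare limit extracted from \eqref{eq:xjacaltrep}.
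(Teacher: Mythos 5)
Your treatment of the $m$ exceptional zeros is correct and is essentially the paper's own argument: the Corollary is presented there as a direct consequence of the Proposition giving \eqref{eq:hPasym}, and dividing by the zero-free, exterior-growing factor $\Jac{\al}{\be}{j}$, invoking Hurwitz's theorem, and matching against the known count of exactly $m$ exceptional zeros is exactly what is intended. For the regular zeros, however, you part company with the paper, and rightly so: the paper derives that clause too from \eqref{eq:hPasym}, which, as you observe, controls nothing inside $(-1,1)$ unless one reads ``approach the zeros of $\Pud{\alpha,\beta}{j}$'' only in the weak sense of zero-counting measures (where it does follow from outer relative asymptotics by standard potential-theoretic arguments). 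Your alternative route through the representation \eqref{eq:xjacaltrep} is the right one for the stronger, pointwise reading of the statement.

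That said, the regular-zero argument as written has a genuine gap, which you yourself flag. Uniform boundedness of $\Pud{\alpha,\beta}{j}$ and $\Pud{\alpha,\beta}{j-1}$ on compact subsets of $(-1,1)$ shows only that the residual term in \eqref{eq:xjacaltrep} --- the one carrying $\Pud{\alpha,\beta}{j-1}$ and not proportional to $\Pud{\alpha,\beta}{j}$ --- is $O(j^{-1})$ in absolute terms; it is \emph{not} automatically ``small relative to $\Pud{\alpha,\beta}{j}$'' near the zeros of the latter, which is precisely where the main term vanishes. To force a sign change of $\hPud{\alpha,\beta}{m}{m+j}$ near each zero of $\Pud{\alpha,\beta}{j}$ one needs the quantitative interior (Darboux--Szeg\H{o}) asymptotics $\Pud{\alpha,\beta}{j}(\cos\theta)=j^{-1/2}k(\theta)\cos(N\theta+\gamma)+O(j^{-3/2})$, which makes the residual $O(j^{-3/2})$ while the main term recovers magnitude of order $j^{-1/2}$ within half a zero spacing; neighbourhoods of $\pm1$ then require the Mehler--Heine scaling. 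You correctly name this missing ingredient but do not supply it, so the regular-zero half remains a sketch --- though completing it along your lines would in fact give more than the paper does, since the paper offers no proof of that clause at all.
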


%

The Heine-Mehler formula for the classical Jacobi polynomials states
\begin{equation}
  \label{eq:hmjacclass}
  n^{-\alpha} \Pud{\alpha,\beta}{n}(\cos(z/n)) \rightrightarrows
  (z/2)^{-\alpha}J_\alpha(z).
\end{equation}
The $X_m$-Jacobi polynomials satisfy a generalized Heine-Mehler
formula, given by the following proposition.
\begin{prop}
  When $n\to \infty$, we get
  \begin{equation}
    \label{eq:hmxjac}
    n^{-\alpha} \hPud{\alpha,\beta}{m}{n}(\cos(z/n)) \rightrightarrows
    (-1)^m(z/2)^{-\alpha}
    \binom{m-1-\alpha}{m} J_\alpha(z),\quad n\geq m.
  \end{equation}
\end{prop}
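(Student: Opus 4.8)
The plan is to reduce the generalized Heine-Mehler formula for the $X_m$-Jacobi polynomials to the classical one \eqref{eq:hmjacclass} by exploiting the representation \eqref{eq:hPasym}, which already identifies the leading behaviour of $\hPud{\alpha,\beta}{m}{m+j}$ away from the oscillatory interval. Since the Heine-Mehler scaling $z\to\cos(z/n)$ pushes the argument toward the endpoint $z=1$ as $n\to\infty$, the natural first step is to examine the alternative representation \eqref{eq:hPrel1}, or equivalently \eqref{eq:xjacaltrep}, where the polynomial $\hPud{\alpha,\beta}{m}{m+j}$ is written as a classical Jacobi factor $\Jac{-\al-1}{\be-1}{m}$ (or $\Jac{-\al-2}{\be}{m}$) times an expression that reduces, after scaling, to $\Jac{\al}{\be}{j}$ up to lower-order corrections in $j$.

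Concretely, I would multiply \eqref{eq:hPasym} (or the more refined \eqref{eq:xjacaltrep}) by $n^{-\alpha}$ and substitute $z=\cos(z/n)$ with $n=m+j$. The key observation is that as $n\to\infty$, the argument $\cos(z/n)\to 1$, so each classical Jacobi factor with \emph{fixed} degree $m$, namely $\Jac{-\al-1}{\be-1}{m}(\cos(z/n))$, converges uniformly on compact sets to its value at $z=1$. By the endpoint identity \eqref{eq:Pab+1}, this limiting value is
\[
\Jac{-\al-1}{\be-1}{m}(1) = \frac{(-\alpha)_m}{m!} = \binom{m-1-\alpha}{m},
\]
which supplies the constant prefactor in \eqref{eq:hmxjac}. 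The remaining degree-$j$ factor, after scaling, is governed by the classical Heine-Mehler formula \eqref{eq:hmjacclass}, producing the Bessel function $(z/2)^{-\alpha}J_\alpha(z)$. The sign $(-1)^m$ is carried along from the definition \eqref{eq:hPdef}.

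The main technical obstacle is controlling the interplay of the two limiting regimes simultaneously: the fixed-degree factor must be handled by pointwise-to-uniform continuity near the endpoint, while the growing-degree factor requires the uniform convergence furnished by the classical Heine-Mehler theorem. One must verify that the cross terms and the $O$-corrections appearing in \eqref{eq:xjacaltrep}, such as the difference $\frac{j}{\alpha+\beta+2j}\Jac{\al}{\be}{j}-\frac{\alpha+j}{\alpha+\beta+2j}\Jac{\al}{\be}{j-1}$, do not contribute a surviving term after the $n^{-\alpha}$ scaling. Here one uses that the consecutive classical Laguerre-type ratio $\Jac{\al}{\be}{j-1}/\Jac{\al}{\be}{j}$ under this scaling tends to the same Bessel limit, so that the coefficients $\tfrac{j}{\alpha+\beta+2j}\to\tfrac12$ and $\tfrac{\alpha+j}{\alpha+\beta+2j}\to\tfrac12$ combine to reproduce a single factor $(z/2)^{-\alpha}J_\alpha(z)$ rather than cancelling it. A careful bookkeeping of these prefactors, together with uniform continuity of the fixed-degree Jacobi polynomials on compact subsets of $\Cset$, completes the argument and isolates the precise constant $(-1)^m\binom{m-1-\alpha}{m}$ claimed in \eqref{eq:hmxjac}.
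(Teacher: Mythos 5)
Your overall strategy coincides with the paper's: scale the representation \eqref{eq:xjacaltrep}, send the fixed-degree Jacobi factors to their values at the endpoint via \eqref{eq:Pab+1}, and apply the classical Heine--Mehler formula \eqref{eq:hmjacclass} to the growing-degree factor; the constant $\Pud{-\alpha-1,\beta-1}{m}(1)=\binom{m-1-\alpha}{m}$ is identified correctly. However, your account of \emph{which} term of \eqref{eq:xjacaltrep} produces the Bessel factor is wrong, and your last step contradicts the sentence preceding it. Under the substitution $z\mapsto\cos(z/n)$, the quantities $n^{-\alpha}\Pud{\alpha,\beta}{j}$ and $n^{-\alpha}\Pud{\alpha,\beta}{j-1}$ tend to the \emph{same} limit $(z/2)^{-\alpha}J_\alpha(z)$; since $\tfrac{j}{\alpha+\beta+2j}\to\tfrac12$ and $\tfrac{\alpha+j}{\alpha+\beta+2j}\to\tfrac12$ and the two terms enter with opposite signs, the bracketed difference in the first line of \eqref{eq:xjacaltrep} \emph{does} cancel in the limit --- it does not ``combine to reproduce a single factor $(z/2)^{-\alpha}J_\alpha(z)$'' as you claim. (That whole first line is in any case annihilated by its prefactor $\frac{\alpha-\beta-m+1}{1+\alpha+j}=O(j^{-1})$.) The surviving contribution comes entirely from the second line: the coefficient $\frac{1+\alpha-m}{1+\alpha+j}\to 0$ kills the $\Pud{-2-\alpha,\beta}{m}$ piece, while $\frac{j}{1+\alpha+j}\to 1$ leaves
\begin{equation*}
\Pud{-\alpha-1,\beta-1}{m}(\cos(z/n))\; n^{-\alpha}\Pud{\alpha,\beta}{j}(\cos(z/n))\rightrightarrows \binom{m-1-\alpha}{m}\,(z/2)^{-\alpha}J_\alpha(z),
\end{equation*}
which is where both the stated constant and the Bessel function actually originate.

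Two smaller points. First, your opening appeal to \eqref{eq:hPasym} is inapplicable: that asymptotic holds on compact subsets of $\Cset\setminus[-1,1]$, whereas $\cos(z/n)\to 1$ sits on the closure of the interval of orthogonality; you do switch to \eqref{eq:xjacaltrep}, which is the right tool, but the initial framing should be dropped. Second, the classical formula \eqref{eq:hmjacclass} carries the degree $j$ in both the scaling factor and the argument, while here one has $n^{-\alpha}$ and $\cos(z/n)$ with $n=m+j$; a short argument using $j/n\to 1$ together with uniform convergence on compact sets (as the paper spells out in the type II Laguerre case) is needed to close that residual gap.
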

\begin{proof}
 Taking the limit $j\to\infty$ in  \eqref{eq:xjacaltrep} and using the
classical Heine-Mehler formula \eqref{eq:hmjacclass} leads to the desired
result,
keeping in mind also that
  \[ \Pud{-\alpha-1,\beta-1}{m}(1) = \binom{m-1-\alpha}{m}.\]
\end{proof}

\section{Summary and Open problems}

We have provided suitable representations of exceptional polynomials in terms
of their classical counterparts by exploiting the isospectral Darboux
transformations that connect them. These representations allow to derive
Heine-Mehler type formulas for the exceptional Jacobi and Laguerre polynomials,
which describe the asymptotic behaviour of their \textit{regular zeros} (those
lying in the interval of orthogonality). The behaviour of the regular zeros of
exceptional polynomials follows the same Bessel asymptotics as the zeros of
their classical counterparts. We have also proved interlacing between
the zeros of exceptional and classical polynomials, while the zeros of
consecutive exceptional polynomials also interlace according to their
Sturm-Liouville character. As for the \textit{exceptional zeros} (those lying
outside the interval of orthogonality) we have established their number and
location and we have proved that for fixed codimension $m$ and large degree $n$
they approach the zeros of a classical polynomial. We have performed a careful
analysis of the admissible ranges of the parameters that ensure a well defined
Sturm-Liouville problem. We have also given raising and lowering relations for the exceptional polynomials. These relations correspond to a \textit{shape-invariant} factorization, i.e. a Darboux transformation that falls within the same class of operators, with a shift in the parameters (see for instance \eqref{sifact1}--\eqref{sifact2}), and they imply that the associated potentials in quantum mechanics will be exactly solvable and shape invariant.

It was recently noticed that more families of exceptional orthogonal polynomials can be constructed through
 \emph{multi-step} Darboux or Darboux-Crum transformations \cite{GKM12a}, an idea that has been further developed
 in  \cite{Grandati1,Quesne3,SO5}.  In this work we have analyzed the zeros of exceptional orthogonal polynomials
that can be obtained from the classical ones by a 1-step Darboux transformation.
These polynomials can be written as a first order linear
differential operator acting on their classical counterparts and the
exceptional weight is a classical weight divided by the square of a classical
polynomial with zeros outside the interval of orthogonality. An open problem is
to extend this analysis to multi-step exceptional families, where exceptional
polynomials are obtained by the action of an $m$-th order differential operator.

We believe that all exceptional orthogonal polynomials can be obtained
from a classical system by a multi-step Darboux transformation, \cite{GKM12c},
and the exceptional weight for these systems will have in its denominator the
Wronskian of all the factorizing functions, which are essentially classical
orthogonal polynomials. The characterization of all such Wronskians whose zeros
lie outside the interval of orthogonality becomes then a crucial question.

It is trivial to know the location of the zeros of a classical
polynomial, and therefore to constrain its parameters so that they fall
outside the interval of orthogonality for the exceptional weight to be regular. The question becomes much more involved when dealing with a Wronskian
of classical polynomials as it happens in the multi-step case. However, this question must be addressed in order to select those
multistep weights that are non-singular.

The position of the zeros for Wronskians of consecutive Hermite polynomials have
been investigated numerically by Clarkson \cite{clarkson1} since these functions
appear as rational solutions to nonlinear differential equations of Painlev\'e
type, \cite{clarkson2}. A further numerical analysis together with some
conjectures in a more general case have been recently put forward by Felder et
al.\cite{felder}, in connection with the theory of
monodromy free potentials. We stress that a Wronskian of classical polynomials
might have no real zeros even if the polynomials themselves do. The Adler-Krein
theorem \cite{adler,krein} provides a useful criterion to identify these cases,
and it is actually a much more general result for eigenfunctions of a
Schr\"odinger operator, not just polynomials. A generalization of this result is
being carried out by Grandati, who is extending the analysis to factorizing
functions of isospectral Darboux transformations \cite{Grandati1}, as opposed to
the Adler-Krein case which refers only to state-deleting Darboux
transformations, for which the factorizing functions are true $L^2$
eigenfunctions. The most general problem of Wronskians that involve factorizing
functions of mixed type remains unsolved.

\vskip 0.6cm
\paragraph{\textbf{Acknowledgements}}
\thanks{

The research of the first author (DGU) has been
supported by Direcci\'on General de
Investigaci\'on, Ministerio de Ciencia e Innovaci\'on of Spain, under
grant MTM2009-06973. The work of the second author
(FM) has been supported by Direcci\'on General de
Investigaci\'on, Ministerio de Ciencia e Innovaci\'on of Spain, grant
MTM2009-12740-C03-01. The research of the thirds author (RM) was supported in
part by NSERC grant RGPIN-228057-2009.
}

\end{document}